%

\documentclass{birkjour}
%
%
%
 \newtheorem{thm}{Theorem}[section]
 
 \newtheorem{lem}[thm]{Lemma}
 
 \theoremstyle{definition}
 
 \theoremstyle{remark}
 \newtheorem{rem}[thm]{Remark}
 
 \numberwithin{equation}{section}

\newcommand{\nr}{{|\!|}}

\newcommand{\1}{H^1(\mathbb{R}^N)}
\newcommand{\s}{\int_{\mathbb{R}^N}}

\newcommand{\stcn}{\xrightarrow[n\to +\infty]{} }
\newcommand{\stcr}{\xrightarrow[r\to +\infty]{} }
\begin{document}

%
%
%
%
%
%
%
%
%

\title[Analysis of a vectorial variational problem]
 {Minimizers of a class of constrained vectorial variational problems: Part I.}

\author[H. Hajaiej]{Hichem Hajaiej}

\address{%
Department of Mathematics, College of Science\br
King Saud University,\br
P.O. Box 2455\br
Riyadh 11451\br
Saudi Arabia.}

\email{hhajaiej@ksu.edu.sa}

\author[P.A. Markowich]{Peter A. Markowich}
\address{Division of Math \& Computer Sc \& Eng\br
King Abdullah University of Science and Technology\br
Thuwal 23955-6900\br
Saudi Arabia}
\email{peter.markowich@kaust.edu.sa}
\author[S. Trabelsi]{Saber Trabelsi}
\address{Division of Math \& Computer Sc \& Eng\br
King Abdullah University of Science and Technology\br
Thuwal 23955-6900\br
Saudi Arabia}
\email{saber.trabelsi@kaust.edu.sa}
\subjclass{12345, 54321}

\keywords{vectorial Schr\"odinger, constrained minimization problem}

\date{September 5, 2013}
\dedicatory{}

\begin{abstract}
In this paper, we prove the existence of minimizers of a class of multi-constrained
variational problems. We consider systems involving a nonlinearity that does not
satisfy compactness, monotonicity, neither symmetry properties. Our approach hinges on the concentration-compactness approach. In the second  part, we will treat orthogonal constrained problems for another class of integrands using density matrices method.
\end{abstract}

\maketitle

\section{\bf Introduction}
\setcounter{equation}{0}
Let $c > 0$ a prescribed constant,  we consider the
following minimization problem
\[
 \mathcal I_{c}=\inf\{\mathcal J(\vec{u}),\qquad \vec{u} \in \mathcal S_c\},\eqno{(\mathcal I)}
 \]
where $\mathcal J $ models an energy functional given as follows
\[\mathcal J(\vec{u}) = \s\left(\frac12\;|\nabla\vec{u}|^2 - 
 F(x,\vec{u})\right) dx,\]
with  $\vec{u} = (u_1,...,u_m) \in  \times^m\1:=\vec{H}^1$  for all integer $m\geq 1$ and a
Carath\'eodory function $F$ satisfying few assumptions listed below. The set $\mathcal S_c$ is given by 
\[
\mathcal S_c = \{\vec{u} = (u_1,...,u_m) \in \times^mH^1,\qquad
 \sum_{i=1}^m \s u^2_i = c^2\}.
 \]
Formally (rigourously under some regularity assumptions on  $F$), solutions
 of $(\mathcal I)$ satisfy 
 \[\left\{ \begin{array}{l}
 \Delta u_1 + \partial_1 F(x, u_1,...,u_m) + \lambda u_1 = 0,\\
 \vdots\qquad\qquad\quad \vdots \qquad \qquad\quad\vdots\\
 \Delta u_m + \partial_m F(x, u_1,...,u_m) + \lambda u_m = 0.
 \end{array}\right.\]
 with  $\lambda$ being the Lagrange multiplier associated to the mass constraint and $\partial_i:=\partial_{u_i}$. In particular, when $\partial_i F(x,u_1,...,u_m) = \partial_i F(x,|u_1|,...,|u_m|)$, solutions of $(\mathcal I)$ can also be viewed as
 standing waves of the following non-linear Schr\"odinger system
 $$\left\{ \begin{array}{l}
 i\partial_t \phi_1(t,x) +
 \Delta_{xx}\phi_1+ \partial_1 F(x, |\phi_1|,..,|\phi_m|) \,\phi_1 = 0,\\
 \vdots\qquad\qquad\vdots\qquad\qquad\vdots\\
 i\partial_t \phi_m(t,x) + 
 \Delta_{xx} \phi_m+\partial_mF(x,|\phi_1|,...,|\phi_m|)\,\phi_m  = 0,\\\\
 \phi_i(0,x) = \phi^0_i(x) \quad 1 \leq i \leq m.
 \end{array}\right.$$
 To our knowledge, the literature is completly silent about $(\mathcal I)$
 when $m \geq 2$ and the non-linearity $F$ does not satisfy the
 standard convexity, compactness, symmetry or monotonicity
 properties. Such a problem appears in many areas, rational mechanics and engineering for instance and especially in non-linear optics, \cite{refop1,refop2}.
\vskip6pt
In this contribution, our purpose is to prove the existence of minimizers to the problem $(\mathcal I)$ for a given function $F : \mathbb{R}^N
 \times \mathbb{R}^m \rightarrow
 \mathbb{R}$ satisfying   $F \in \mathcal D( \mathbb{R}^N \times \mathbb{R}^m)$ and 
\begin{itemize}
\item[$\mathcal A_0:$] For all $x \in \mathbb{R}^N, \vec{s}
 \in \mathbb{R}^m$, there exist $ \; A,B >0$ and $0<\ell<\frac4N$ such that for all $1\leq i\leq m$, the function $F$ satisfies
 \[0 \leq F(x, \vec{s}) \leq A(|\vec{s}|^2 + |\vec{s}|^{\ell+2})\:\:{\rm and}\:\: \partial_i F(x,\vec{s}) \leq B(|s| +
 |\vec{s}|^{\ell+1}).\]
\item[$\mathcal A_1:$] There exist $\Delta > 0,\; S > 0,\; R > 0, \alpha_1,...,\alpha_m > 0, t \in
 [0,2)$ such that for all $|x|\geq R$ and $|\vec{s}| < S$ with  $ t< N\left(1 -\frac{\alpha}{2}\right)+ 2  $ and $ \alpha =
 \displaystyle{\sum^m_{i=1}\alpha_i}$, it holds
 \[F(x, \vec{s}) > \Delta |x|^{-t}
 |s_1|^{\alpha_1}...|s_m|^{\alpha_m}.\]
 \item[$\mathcal A_2:$] For all $x \in \mathbb{R}^N , \vec{s} \in \mathbb{R}^m$ and $ \theta \geq1$, we have 
 \[F(x, \theta \vec{s})\geq
 \theta^2 F(x,\vec{s}).\]
\end{itemize}
 Moreover, we assume that there exists a periodic function $F^\infty(x,\vec s)$, that is there exists $z \in
 \mathbb{Z}^N$ such that $F^\infty(x+z, \vec{s}) = F^\infty(x,\vec{s})$ for all $\vec s\in \mathbb R^N$ and $\vec s\in \mathbb R^m$, satisfying $\mathcal{A}_1$ and 
 \begin{itemize}
 \item[$\mathcal A_3:$] There exists $0<\alpha<\frac4N$ such that it holds uniformly for all $\vec s \in \mathbb R^m$
 \[\lim_{|x|\rightarrow +\infty}\;
 \frac{F(x,\vec{s})-F^\infty(x,\vec{s})}{|\vec{s}|^2+|\vec{s}|^{\alpha+2}}=0.\]
  \item[$\mathcal A_4:$] For all $x \in \mathbb{R}^N, \vec{s}
 \in \mathbb{R}^m$, there exist $ \; A',B' >0$ and $0<\beta<\ell<\frac4N$ such that for all $1\leq i\leq m$, the function $F^\infty$ satisfies
  \[0 \leq F^\infty(x, \vec{s}) \leq A'(|\vec{s}|^{\beta+2} + |\vec{s}|^{\ell+2})\:\:{\rm and}\:\: \partial_i F^\infty(x,\vec{s}) \leq B'(|s|^{\beta+1} +
 |\vec{s}|^{\ell+1}).\]
  \item[$\mathcal A_5:$] There exists $\sigma\in\left.\left[0,\frac4N\right)\right.$ such that for all $x \in \mathbb{R}^N, \vec{s} \in \mathbb{R}^m$ and $\theta\geq 1$, it holds
  \[F^\infty(x, \theta \vec{s})
 \geq \theta^{\sigma+2}
 F^\infty (x, \vec{s}).\]
  \item[$\mathcal A_6:$] For all $x \in
 \mathbb{R}^N$ and $ s \in \mathbb{R}^m$, we have $F^\infty(x,\vec{s}) \leq F(x,\vec{s})$ with strict inequality in
 a measurable set having a positive Lebesgue measure.
 \end{itemize}
\vskip6pt
The class of nonlinearities satisfying $\mathcal A_0-\mathcal A_6$ is certainly not empty. Actually, it contains physical cases. For the sake of simplicity we shall here state the following example in the setting $m=2$ which can be extended to $m>2$. Let $k\in\mathbb N^\star$ and for all $1\leq i\leq k$, let the reals $l_{1,j},l_{2,j}>0$ such that $l_{1,j}+l_{2,j}<\frac4N$ the function 
\[F(r,\vec s)=p(r)\,|\vec s|^2 +q(r)\,\sum_{i=1}^k\,|s_1|^{l_{1,j}+1}\,|s_2|^{l_{2,j}+1},\]
 where $p,q:[0,+\infty)\mapsto \mathbb R_+$ being two bounded mapping satisfying $ p(r) \stcr 0$ and $p(r) \stcr q_\infty$ with $q_\infty\leq q(r)$ for almost all $r$ and $q_\infty<q(r)$ in a set with measure greater than $0$.
 \vskip6pt
 \noindent To our knowledge, all existing results addressed the nonlinearity of the type $F(r,\vec s)=\frac{1}{2p}\,s_1^{2p}+\frac{1}{2p}\,s_2^{2p} +\frac\beta p\,s_1^p\,s_2^p$. It is known that single mode optical fibers are not unimodal but bimodal due to the presence of birefringence which heavily influences the way of propagation along the fiber. $F$ is related to the index of refraction of the media in which the wave propagates. By Snell's law, it is not reasonable to assume that $F$ has such a form although in some situations, it provides with a good approximation of the index of refraction. We refer the reader Refs. \cite{ref1,ref2,ref3,ref4,ref5,ref6} for more detail concerning applications. Let us also mention that the example we gave above describes also the Kerr-like photorefractive media in optics. It appears in the binary mixture of Bose-Einstein condensates in two different hyperfine states.
 \vskip6pt
 \noindent Our main result is the following 
 \begin{thm}\label{thm1}
Let $\mathcal A_1-\mathcal A_6$ hold true, then there exists $\vec u_c\in\mathcal S_c$ such that $\mathcal J(\vec u_c)=\mathcal I_c$.
\end{thm}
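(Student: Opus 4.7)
The plan is to apply the concentration-compactness principle of P.-L.~Lions to the sequence of densities $\rho_n:=\sum_{i=1}^m u_{i,n}^2$ associated to a minimizing sequence $(\vec u_n)\subset\mathcal S_c$, after establishing two preliminary facts. First, $(\vec u_n)$ is $\vec H^1$-bounded: by $\mathcal A_0$ and the Gagliardo--Nirenberg inequality one has $\int F(x,\vec u_n)\,dx\leq A c^2+C_1 c^{\ell+2-N\ell/2}\|\nabla\vec u_n\|_{L^2}^{N\ell/2}$, and since $\ell<4/N$ forces $N\ell/2<2$, the nonlinear part of $\mathcal J$ is dominated by the kinetic part, giving a uniform lower bound and $\vec H^1$-boundedness. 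Second, $\mathcal I_c<0$: I would test with the rescaling $\vec u_\theta(x)=\theta^{N/2}\vec u(\theta x)$ of a fixed compactly supported $\vec u\in\mathcal S_c$ translated far from the origin. This preserves the $L^2$-mass, makes the kinetic part grow as $\theta^2$, and, thanks to the pointwise lower bound in $\mathcal A_1$ together with a change of variables, makes $\int F(x,\vec u_\theta)\,dx\geq C_2\,\theta^{N\alpha/2-N+t}$. The exponent $N\alpha/2-N+t$ is strictly less than $2$ precisely because of the bound $t<N(1-\alpha/2)+2$ in $\mathcal A_1$, so $\mathcal J(\vec u_\theta)<0$ for $\theta$ small enough.

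The concentration-compactness lemma applied to $\rho_n$ produces, up to extraction, one of three alternatives. \emph{Vanishing} is ruled out because it forces $\vec u_n\to 0$ in $L^p(\mathbb R^N)$ for all $2<p<2^{\star}$; then combining the bound from $\mathcal A_0$ on any fixed ball $\{|x|\leq R\}$ (where the $L^2$-mass of $\vec u_n$ vanishes by the vanishing hypothesis) with the decay $F(x,\vec s)=o(|\vec s|^2)+O(|\vec s|^{\beta+2}+|\vec s|^{\ell+2})$ for $|x|\geq R$ (a consequence of $\mathcal A_3$ and $\mathcal A_4$) yields $\int F(x,\vec u_n)\,dx\to 0$ and hence $\liminf\mathcal J(\vec u_n)\geq 0$, contradicting $\mathcal I_c<0$. \emph{Dichotomy} is ruled out by strict subadditivity. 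The scaling ingredient is furnished by $\mathcal A_2$: for $\theta\geq 1$ and any $\vec v$,
\[\mathcal J(\theta\vec v)=\frac{\theta^2}{2}\|\nabla\vec v\|_{L^2}^2-\int F(x,\theta\vec v)\,dx\leq\theta^2\mathcal J(\vec v),\]
together with its analogue for $F^\infty$ via $\mathcal A_5$. Rescaling each split piece of mass $\alpha$ and $c^2-\alpha$ back onto $\mathcal S_c$ gives $\liminf\mathcal J(\vec v_n)\geq(\alpha/c^2)\mathcal I_c$ and $\liminf\mathcal J(\vec w_n)\geq((c^2-\alpha)/c^2)\mathcal I_c$ (or the same with $\mathcal I_c^\infty$ when one piece escapes to infinity); combined with the asymptotic decoupling $\mathcal J(\vec u_n)=\mathcal J(\vec v_n)+\mathcal J(\vec w_n)+o(1)$, this produces a contradiction with $\mathcal J(\vec u_n)\to\mathcal I_c$ once the genuine strict gap is extracted from $\mathcal I_c<0$ and, in the escape-to-infinity case, from $\mathcal A_6$.

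In the \emph{compactness} case, one obtains $(y_n)\subset\mathbb R^N$ such that $\rho_n(\cdot+y_n)$ is tight. If $(y_n)$ is bounded, then $\vec u_n$ converges weakly in $\vec H^1$, strongly in $L^2$, and strongly in $L^{\ell+2}$ (since $\ell+2<2^{\star}$) to some $\vec u_c\in\mathcal S_c$; the nonlinear term passes to the limit via $\mathcal A_0$ and dominated convergence, and weak lower semicontinuity of the gradient yields $\mathcal J(\vec u_c)\leq\mathcal I_c$, so $\vec u_c$ achieves $\mathcal I_c$. If instead $|y_n|\to\infty$, then $\mathcal A_3$ and $\mathcal A_4$ allow one to substitute $F^\infty$ for $F$ in the translated energy asymptotically, producing $\mathcal I_c^\infty\leq\mathcal I_c$. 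But plugging the translated limit (or a further translate of it whose support meets the set on which $F>F^\infty$) back into $\mathcal J$ and using $\mathcal A_6$ yields $\mathcal J(\vec u_c)<\mathcal J^\infty(\vec u_c)=\mathcal I_c^\infty$, i.e.\ $\mathcal I_c<\mathcal I_c^\infty$, a contradiction; hence $(y_n)$ is bounded and a minimizer exists.

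The main obstacle is the strict subadditivity used to exclude dichotomy: $\mathcal A_2$ and $\mathcal A_5$ are only non-strict, so the strict gap between $\mathcal I_c$ and $\mathcal I_{c_1}+\mathcal I_{c_2}^{(\infty)}$ must be extracted by combining these scaling inequalities with $\mathcal I_c<0$ (which forces strictly positive nonlinear energy in each split piece) and, whenever one piece escapes to infinity, with the strict improvement $F^\infty<F$ on a set of positive measure provided by $\mathcal A_6$.
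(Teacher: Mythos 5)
Your skeleton matches the paper's: concentration--compactness applied to $\rho_n=\sum_i u_{n,i}^2$, with $\vec H^1$-boundedness from $\mathcal A_0$ plus Gagliardo--Nirenberg, $\mathcal I_c<0$ from the $\lambda^{N/2}$-rescaling and $\mathcal A_1$, and essentially the paper's treatment of vanishing and of the compactness alternative. The genuine gap is in the exclusion of dichotomy, which you yourself flag as ``the main obstacle'' but do not actually resolve. Your rescaling computation gives $\liminf\mathcal J(\vec v_n)\geq(\alpha/c^2)\,\mathcal I_c$ and $\liminf\mathcal J(\vec w_n)\geq((c^2-\alpha)/c^2)\,\mathcal I_c$, and these two lower bounds sum exactly to $\mathcal I_c$; combined with the splitting estimate $\mathcal J(\vec u_n)\geq\mathcal J(\vec v_n)+\mathcal J(\vec w_n)-\delta$ you only recover $\mathcal I_c\geq\mathcal I_c-\delta$, which is no contradiction. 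The non-strict homogeneity $\mathcal A_2$ (equality is allowed, e.g.\ when $F$ is quadratic in $\vec s$) together with $\mathcal I_c<0$ cannot by itself produce a strict gap --- it yields only the non-strict inequality $\mathcal I_c\leq\mathcal I_a+\mathcal I_{\sqrt{c^2-a^2}}$ (Lemma \ref{Lemma 3.3}\,(i)). Likewise $\mathcal A_6$ alone gives only $\mathcal I_b\leq\mathcal I_b^\infty$: along a minimizing sequence for $\mathcal I_b^\infty$ the quantity $\int(F-F^\infty)(x,\vec u_n)\,dx$ may tend to $0$, since the mass can drift away from the fixed positive-measure set where $F>F^\infty$, so strictness of the infimum inequality does not follow from the pointwise one.

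What is missing is the two-stage structure on which the paper's argument rests. One must first prove Theorem \ref{thm2}, i.e.\ that the translation-invariant problem at infinity is \emph{attained}, by running the whole concentration--compactness scheme for $\mathcal J^\infty$ with the strict subadditivity $\mathcal I^\infty_c<\mathcal I^\infty_a+\mathcal I^\infty_{\sqrt{c^2-a^2}}$ of Lemma \ref{Lemma 3.3}\,(ii) (this is where $\mathcal A_5$ and the periodicity of $F^\infty$ enter). Only once a minimizer $\vec u^\infty_b\in\mathcal S_b$ of $\mathcal I^\infty_b$ exists does $\mathcal A_6$ give the strict inequality $\mathcal I_b\leq\mathcal J(\vec u^\infty_b)<\mathcal J^\infty(\vec u^\infty_b)=\mathcal I^\infty_b$ (Lemma \ref{Lemma 3.4}), whence $\mathcal I_c<\mathcal I_a+\mathcal I^\infty_{\sqrt{c^2-a^2}}$. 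This is the inequality that kills dichotomy, because in every dichotomy configuration one of the two pieces is supported far from the origin (either $\{y_n\}$ is bounded and $\vec w_n$ lives outside $B(y_n,R_n)$ with $R_n\to+\infty$ after a harmless adjustment, or $|y_n|\to+\infty$ and $\vec v_n$ lives in $B(y_n,2R_0)$), so that by $\mathcal A_3$ its energy is bounded below by the corresponding $\mathcal I^\infty$; the continuity of $c\mapsto\mathcal I_c$ and $c\mapsto\mathcal I^\infty_c$ (Lemma \ref{Lemma 3.1}\,(iv)), which you also omit, is then needed to pass from the approximate masses to $a$ and $\sqrt{c^2-a^2}$. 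Without Theorem \ref{thm2} as an intermediate result your argument cannot close.
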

\noindent Also, we have the following intermediate result
\begin{thm}\label{thm2}
If $\mathcal A_1$ holds true for $F^\infty$, $\mathcal A_4$ and
 $\mathcal A_5$ are satisfied, then there exists $\vec{u} \in \mathcal S_c$ such
 that $\mathcal J^\infty(\vec{u}_c) = \mathcal I^\infty_{c}$ where
 $$\mathcal J^\infty(\vec{u}) = \s\left(\frac12\;|\nabla\vec{u}|^2 - 
 F^\infty(x,\vec{u})\right) dx,$$
 $$\mathcal I^\infty_{c} = \inf \{J^\infty(\vec{u}),\qquad
 \quad \vec{u} \in \mathcal S_c\}.\eqno{(\mathcal I_\infty)}$$
\end{thm}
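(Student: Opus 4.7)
The plan is to prove Theorem~\ref{thm2} by the concentration-compactness method of P.-L.~Lions, exploiting the $\mathbb Z^N$-periodicity of $F^\infty$ to recover compactness of minimizing sequences modulo integer translations.

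First, I would establish that $\mathcal I^\infty_c$ is finite and strictly negative. The upper bound in $\mathcal A_4$, combined with the Gagliardo--Nirenberg inequality and the subcriticality $\beta,\ell<4/N$, shows that on $\mathcal S_c$ the nonlinear term is controlled by a fractional power of $\s|\nabla\vec u|^2$ strictly less than one, so $\mathcal J^\infty$ is bounded below and coercive, and every minimizing sequence is bounded in $\vec H^1$. For strict negativity I would test $\mathcal J^\infty$ on $\vec u_\lambda(x)=\lambda^{N/2}\vec u_0\!\left(\lambda(x-y_\lambda)\right)\in\mathcal S_c$, where $\vec u_0$ has smooth compactly supported components that are simultaneously nonzero on a common ball and $y_\lambda$ is chosen so that the support of $\vec u_\lambda$ lies inside $\{|x|\geq R\}$. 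Then $\s|\nabla\vec u_\lambda|^2\sim\lambda^2$ while the lower bound in $\mathcal A_1$ gives $\s F^\infty(x,\vec u_\lambda)\geq C\lambda^{t+N\alpha/2-N}$; the assumption $t<N(1-\alpha/2)+2$ makes this exponent strictly smaller than $2$, hence $\mathcal J^\infty(\vec u_\lambda)<0$ for $\lambda$ small.

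Second, I would prove the strict subadditivity $\mathcal I^\infty_c<\mathcal I^\infty_{c_1}+\mathcal I^\infty_{c_2}$ whenever $c_1,c_2>0$ and $c_1^2+c_2^2=c^2$. Given a near-minimizer $\vec u_\varepsilon$ at mass $c_i$, testing $\theta\vec u_\varepsilon$ with $\theta=c/c_i>1$ and using $\mathcal A_5$ yields
\[\mathcal J^\infty(\theta\vec u_\varepsilon)\le\theta^2\,\mathcal J^\infty(\vec u_\varepsilon)-(\theta^{\sigma+2}-\theta^2)\s F^\infty(x,\vec u_\varepsilon)\,dx.\]
Since $\mathcal I^\infty_{c_i}<0$ forces $\s F^\infty(x,\vec u_\varepsilon)\,dx\geq\delta>0$ for $\varepsilon$ small, this produces $\mathcal I^\infty_c<(c/c_i)^2\,\mathcal I^\infty_{c_i}$. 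Rearranging the two such inequalities (one for $c_1$, one for $c_2$) and summing, using $c_1^2+c_2^2=c^2$, gives the desired strict subadditivity.

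Third, for a minimizing sequence $\{\vec u_n\}\subset\mathcal S_c$, which is bounded in $\vec H^1$ by the coercivity step, I would apply Lions' concentration-compactness lemma to the densities $\rho_n=|\vec u_n|^2/c^2$. Vanishing would give $\vec u_n\to 0$ in every $L^p$ with $2<p<2^*$, hence $\s F^\infty(x,\vec u_n)\to 0$ by $\mathcal A_4$, contradicting $\mathcal I^\infty_c<0$. Dichotomy would split $\vec u_n$ into a concentrated core of mass $c_1^2$ and a far-apart remainder of mass $c_2^2$ with $\mathcal I^\infty_c\geq\mathcal I^\infty_{c_1}+\mathcal I^\infty_{c_2}$, contradicting the previous step. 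Hence compactness modulo translation holds: there exist $y_n\in\mathbb R^N$ such that $\vec u_n(\cdot+y_n)$ is tight. By $\mathbb Z^N$-periodicity of $F^\infty$ I may replace $y_n$ by its nearest lattice point $z_n\in\mathbb Z^N$ without affecting the energy or the constraint; the translated sequence $\vec v_n:=\vec u_n(\cdot+z_n)$ then converges along a subsequence weakly in $\vec H^1$ and strongly in $L^2(\mathbb R^N)$ to some $\vec v\in\mathcal S_c$. The bounds in $\mathcal A_4$ upgrade this to strong convergence in $L^{\beta+2}\cap L^{\ell+2}$, so $\s F^\infty(x,\vec v_n)\to\s F^\infty(x,\vec v)$, and weak lower semicontinuity of the Dirichlet term gives $\mathcal J^\infty(\vec v)\leq\mathcal I^\infty_c$, forcing equality.

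The main obstacle is the strict subadditivity step: one must argue that the nonlinear energy along a near-minimizing sequence cannot drift to zero (an argument in which the strict negativity $\mathcal I^\infty_c<0$ is indispensable), so that the scaling produced by $\mathcal A_5$ strictly improves the energy. A secondary delicate point is the replacement of $y_n\in\mathbb R^N$ by the lattice point $z_n\in\mathbb Z^N$: it is precisely the $\mathbb Z^N$-periodicity of $F^\infty$, and not a weaker translation invariance, that preserves the energy and allows the limit $\vec v$ to retain the full mass $c$.
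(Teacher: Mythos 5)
Your proposal is correct and follows essentially the same route as the paper: Lions' concentration-compactness applied to the densities, with vanishing excluded via $\mathcal A_4$ and the strict negativity $\mathcal I^\infty_c<0$, dichotomy excluded via the strict subadditivity $\mathcal I^\infty_c<\mathcal I^\infty_{c_1}+\mathcal I^\infty_{c_2}$, and compactness upgraded to strong convergence by replacing the translations $y_n$ with lattice points $z_n\in\mathbb Z^N$ so that the $\mathbb Z^N$-periodicity of $F^\infty$ preserves the energy. The only difference is one of bookkeeping: you sketch the $\mathcal A_5$-scaling proof of strict subadditivity explicitly, whereas the paper isolates it as Lemma~\ref{Lemma 3.3} and refers to Lions for the details.
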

\noindent  Our proofs of Theorems \ref{thm1} and \ref{thm2} are based on the breakthrough concentration-compactness principle, \cite{PL1,PL2}. Such a principle states in the one-constrained setting for 
 $$(\imath)\qquad \imath_c = \inf\{j(u),\qquad \s u^2 = c^2\},$$
 where $j(u) = \displaystyle{\frac{1}{2}\s|\nabla u|^2\,dx - \s} f(x,
 u(x))\,dx$, that if $\{u_n\}_{n\in\mathbb N}$ is a minimizing sequence of the problem
 $(\imath)$, then only one of the three following scenarios can occur.
 \begin{itemize}
 \item {\it Vanishing}: $\displaystyle{\lim_{n\rightarrow +\infty}
 \sup_{y \in \mathbb{R}^N}
 \int_{B(y,R)}}u^2_n(x)dx = 0$.
 \item {\it Dichotomy}: There exists $a \in (0,c)$ such that $\forall\;
 \varepsilon > 0, \exists\; n_0 \in \mathbb{\mathbb{N}}$
 and two bounded sequences in $\1, \{u_{n,1}\}_{n\in\mathbb N}$ and $\{u_{n,2}\}_{n\in\mathbb N}$
 (all depending on $\varepsilon)$ such that for every $n \geq
 n_0$, it holds 
 $$|\s u^2_{n,1}\,dx -a^2| < \varepsilon ;\quad |\s u^2_{n,2} \,dx
 - (c^2-a^2)|< \varepsilon$$
 with $\lim_{n\rightarrow +\infty} \,dist \,supp \,(u_{n,1}, u_{n,2}) = +\infty$.
 \item {\it Compactness}: There exists a sequence $\{y_n\}_{n\in\mathbb N} \subset
 \mathbb{R}^N$ such that, for all $\varepsilon > 0$, there exists
 $R(\varepsilon)$ such that for all $n\in\mathbb N$
 $$\int_{B(y_n,R(\varepsilon))} u^2_n(x)dx \geq c^2 -
  \varepsilon.$$
 \end{itemize}
The seminal work of P.L. Lions states a general line of attack to
exclude the two first alternatives. When one knows that compactness
is the only possible case, $(\imath)$ becomes much more easier to handle. Indeed, to rule out vanishing the main ingredient is to get a strict sign of the value of $\imath_c$ (let us say $\imath_c < 0$ without loss of
generality). This can be obtained by dilatation arguments or test functions techniques .
The more delicate point is to prove that dichotomy cannot occur. For that purpose, Lions suggested a heuristic approach based on the strict subadditivity inequality 
\begin{equation}
\imath_c < \imath_a + \imath^\infty_{c-a}\quad \forall\; a \in (0,c),\label{eq1.4}\end{equation}
where 
\[\imath^\infty_c = \inf\{j^\infty(u),\qquad  \s |u|^2dx=c^2\},\] \[ j^\infty(u) = \displaystyle{\frac{1}{2}\s|\nabla u|^2\,dx - \s} f^\infty(x,
 u(x))\,dx,\] and $f^\infty$ is defined as in $\mathcal A_3$. On the other hand, we should establish suitable assumptions on $f$
 for which $j(u_n) \geq j(u_{n,1}) + j^\infty(u_{n,2}) - g(\delta)$ where
 $g(\delta) \rightarrow 0$ as $\delta \rightarrow 0$. This fact requires a deep study of the functionals $j$ and
 $j^\infty$. The continuity of $\imath_c$ and $\imath^\infty_c$ also plays a
 crucial role to show that dichotomy cannot occur. These issues do
 not seem to be discussed in the seminal paper of Lions.\vskip6pt
 When one knows that compactness is the only plausible alternative,
 the strict inequality $\imath_c < \imath^\infty_c$ is very helpful to
 prove that $(\imath)$ admits a solution. Let us mention that \eqref{eq1.4} and $\imath_c < \imath^\infty_c$ seem to be inescapable to rule out the dichotomy in Lions method. In the most interesting cases $(\imath^\infty_c \neq 0)$. In order to get $\imath_c < \imath^\infty_c$, we need first to apply the
 concentration-compactness method to the problem at infinity. This
 problem is less complicated than the original one since it has
 translation invariance properties. The key tool to prove that $\imath^\infty_c$ is achieved, that is 
 \begin{equation}\exists\; u_\infty \in S_c\quad \mbox{ such that }
 \quad j^\infty(u_\infty) = \imath^\infty_c,
 \label{eq1.6}\end{equation}
 is the strict subadditivity inequality $\imath^\infty_c < \imath^\infty_a + \imath^\infty_{c-a}$.
 On the other hand, it is quite easy to establish assumptions on
 $f$ such that for all $u\in\1$, we get $j(u) < j^\infty(u)$.Therefore, $\imath_c \leq \imath^\infty_c$. Thus, with \eqref{eq1.6}, we get $\imath_c < \imath^\infty_c$. Hence to obtain \eqref{eq1.4}, it suffices to prove that $\imath_c \leq \imath_a + \imath_{c-a}$
 which can be immediately derived from the following property
 $$f(x, \theta s) \geq \theta^2f(x,s)\quad \forall\; s \in \mathbb{R}_+,
 x \in \mathbb{R}
 \mbox{ and } \theta > 1.$$
 To study the multi-constrained variational problem $(\mathcal I)$, we will
 follow the same line of attack described in details above. Let us
 first emphasize that even for $m = 1$, it does not seems to us that
 the discussion presented in Ref. \cite{PL1,PL2} contains all the details and
 some steps are only stated heuristically. Also, to our knowledge, there are no previous results dealing with
  $(\mathcal I)$ when $m \geq 2$ and the non-linearity $F$ does not satisfy
  the classical convexity, compactness and monotonicity properties.
  Quite recently, in Ref. \cite{H}, the author was able to generalize and
  extend previous results addressed to $(\mathcal I)$ when $F$ is radial and
  supermodular (i.e $\partial_i\partial_j F \geq 0$ $\forall\; 1 \leq i \neq j \leq
  m$ when $F$ is smooth).
\vskip6pt
 In the vectorial context, the equivalent of \eqref{eq1.4} is 
  \begin{equation}\mathcal I_{c} < \mathcal I_{a} + \mathcal I_{c-a}^\infty\quad
  \forall\; 0 < a < c.\quad \label{eq1.10}\end{equation}
  We will first prove that $\mathcal I_{c} < 0$ in Lemma \ref{Lemma 3.2}. This property together with $\mathcal A_2$ will permit us to infer
  \begin{equation}\mathcal I_{c} \leq \mathcal I_{a} + \mathcal I_{c-a}\quad
  \forall\; 0 <a < c.\quad 
  \label{eq1.11}\end{equation}
  Following the same approach detailed for the scalar case, we will
  then study $(\mathcal I_\infty)$ and prove that this variational problem
  has a minimum. That is, there exists $\vec{u}^\infty_c \in \mathcal S_c$ such that
  \begin{equation}\mathcal J^\infty(\vec{u}^\infty_c) = \mathcal I^\infty_{c}.
  \label{eq1.12}\end{equation}
  This equality is obtained thanks to the subadditivity condition
  \begin{equation*}\mathcal I^\infty_{c} < \mathcal I^\infty_{a} \; +
  \mathcal I^\infty_{c-a}\quad 
  \forall\; 0 < a < c, \end{equation*}
  which is proved in part b) of Lemma \ref{Lemma 3.3}. On the other hand, $\mathcal A_6$ tells us that for all $\vec u\in \1$, we have 
  \begin{equation*}\mathcal J(\vec{u}) < \mathcal J^\infty(\vec{u}).\end{equation*}
  Therefore, with \eqref{eq1.12} we get  
  \begin{equation}\mathcal I_{c} < \mathcal I^\infty_{c}.\label{eq1.15}\end{equation}
  Now, \eqref{eq1.11} and \eqref{eq1.15} lead to \eqref{eq1.10}. Then using the properties of the splitting sequences $\vec{v}_n$
  and $\vec{w}_n$ (see appendix) and those of the functionals $ \mathcal J$ and
  $\mathcal J^\infty$ (Lemma \ref{Lemma 3.1}), we prove that any minimizing sequence of
  $(\mathcal I)$ is such that
  $$\mathcal J(\vec{u}_n) \geq \mathcal J(\vec{v}_n) + \mathcal J^\infty(\vec{w}_n) -
   \delta\quad \delta \rightarrow 0,$$
  or
  $$ \mathcal J(\vec{u}_n) \geq \mathcal J^\infty(\vec{v}_n) + \mathcal J(\vec{w}_n) - \delta.$$
  This leads to a contradiction with \eqref{eq1.10}. Therefore compactness
  occurs and we can conclude that Theorem \ref{thm1} holds true using
  \eqref{eq1.15}.
  \vskip6pt
  \noindent For the convenience of the reader, we summarize our approach
  (inspired by Lions principle) into the following steps
  \begin{itemize}
 \item[{\it i})] Obtain useful properties about the functionals $\mathcal J$ and
  $\mathcal J^\infty$ (Lemma \ref{Lemma 3.1}).
  \item[{\it ii})] Prove that $\mathcal I_{c} < 0$ and $\mathcal I^{\infty}_{c} <
  0$ (Lemma \ref{Lemma 3.2}).
 \item[{\it iii})] Show that $\mathcal I_{c} \leq \mathcal I_{a} +
  \mathcal I_{c-a}$ (Lemma \ref{Lemma 3.3}).
 \item[{\it iv})] Prove that $(\mathcal I_\infty)$ is achieved thanks to the
  strict inequality
  $$\mathcal I^\infty_{c} < \mathcal I^\infty_{a} +\mathcal  I^\infty_{c-a}.$$
  \item[{\it v})] Prove that $I_{c} < I^\infty_{c}$ (Lemma \ref{Lemma
  3.4}).
  \item[{\it vi})] The inequality $\mathcal I_{c} < \mathcal I_{a} +
  \mathcal I^\infty_{c-a}$ follows from Step {\it iii} and Step {\it v}.
  \item[{\it vii})] Only compactness can occur. In fact Step {\it ii} permits us
  to rule out vanishing. Step {\it i} and step {\it vi} will be crucial to
  eliminate dichotomy.
\end{itemize}
From now on, $|\vec s|$ will denote the modulus of the vector $ \vec{s} = (s_1,...,s_m)$ where $s_i \in\mathbb R$ for all $1\leq i\leq m$ and $m\in \mathbb N^\star$. The critical sobolev exponent will be denoted $2^\star=\frac{2N}{N-2}$. Also, if $\vec{u} = (u_1,...,u_m) \in \times ^m L^p(\mathbb R^N)  :=\vec{L}^p$, then $\nr\vec{u}\nr_{\vec{L}_p} := \displaystyle{\sum_{i=1}^m}\nr u_i\nr_p$ and equivalently for all functional spaces. The notation $\nr\cdot\nr_p$ stands for the $L^p(\mathbb R^N)$ norm and we shall write $L^p$ instead of $L^p(\mathbb R^N)$, $H^1$ instead of $H^1(\mathbb R^N)$ etc. Moreover, we shall use implicitly the obvious estimate $ \nr \vec u\nr_{\vec L_p} \leq c_p \,\sum_{i=1}^m \nr u_i\nr_{p}$ and $c_p$ the associated universal constant.
\section{A few technical Lemmata}
We start by collecting some useful Lemmas. First of all, we claim 
\begin{lem}\label{Lemma 3.1}
Let $F$ satisfies $\mathcal A_0$. Then
\begin{itemize}
\item[{\it i})] 
\begin{itemize}
\item[a)] $\mathcal J \in C^1(\vec{H}^1,\mathbb{R})$ and there exists a
constant $E > 0$ such that for all $\vec{u} \in \vec{H}^1$, it holds
$$\nr \mathcal J'(\vec{u})\nr_{\vec{H}^{-1}} \leq E \left(
\nr\vec{u}\nr_{\vec{H}^1} + \nr\vec{u}\nr^{1+
\frac{4}{N}}_{\vec{H}^1}\right).$$ 
\item[b)]  $\mathcal J^\infty \in C^1(\vec{H}^1, \mathbb{R})$ and there exists
a constant $E_\infty > 0$ such that for all $\vec{u} \in \vec{H}^1$, it holds
$$\nr \mathcal J^{\infty'}(\vec{u})\nr_{\vec{H}^{-1}} \leq E_\infty\left(\nr\vec{u}\nr_{\vec{H}^1}
+\nr\vec{u}\nr_{\vec{H}^1}^{1+ \frac{4}{N}}\right).$$
\end{itemize}
\item[{\it ii})] There exist constants $A_i, B_i > 0$ such that for all $\vec{u} \in S_c$, we have (with $\sigma,\sigma_1$ and $q,q_1$ defined in the proof below)
 $$\mathcal J(\vec{u}) \geq A_1\nr\nabla \vec{u}\nr^2_2 - A_2
 c^2 - A_3 c^{(1-\sigma)(\ell+2)q},$$
 $$\mathcal J^\infty(\vec{u}) \geq B_1\nr\nabla \vec{u}\nr^2_2
- B_2 c^{(1-\sigma_1)(\beta+2)q_1} - B_3c^{(1-\sigma)(\ell+2)q}.$$
\item[{\it iii})] \begin{itemize}
\item[a)] $\mathcal I_{c} > - \infty$ and any minimzing sequence of
$(\mathcal I)$ is bounded in $\vec{H}^1$.
\item[b)] $\mathcal I^\infty_{c} > - \infty$ and any minimizing
sequence of $(\mathcal I_\infty)$ is bounded in $\vec{H}^1$.
\end{itemize}
\item[{\it iv})] \begin{itemize}
\item[a)] The mapping $c \mapsto \mathcal I_{c}$ is continuous on $(0, +\infty)$.
\item[b)] The mapping $c \mapsto \mathcal I^\infty_{c}$ is continuous on $(0, +\infty)$.
\end{itemize}
\end{itemize}
\end{lem}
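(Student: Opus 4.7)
I would handle the four claims in turn, leaning on the growth hypothesis $\mathcal A_0$ (respectively $\mathcal A_4$) together with Sobolev embedding and Gagliardo--Nirenberg interpolation, all of which are subcritical thanks to $\ell<\frac{4}{N}$.

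For part (\textit{i}) I would verify $C^1$-regularity of the nonlinear term by the standard Nemytskii argument. The bound $|\partial_i F(x,\vec s)|\le B(|\vec s|+|\vec s|^{\ell+1})$ from $\mathcal A_0$ shows that $\partial_i F(\cdot,\vec u)\in L^2+L^{(\ell+2)/(\ell+1)}$ for $\vec u\in\vec H^1$, since $\ell+2<2+\frac{4}{N}\le 2^\star$, and dominated convergence then gives continuity of the Fr\'echet derivative. For the gradient bound I would pair $\mathcal J'(\vec u)\cdot\vec v=\s\nabla\vec u\cdot\nabla\vec v-\sum_i\s\partial_iF(x,\vec u)\,v_i$ against $\vec v\in\vec H^1$, estimating the gradient term by $\|\vec u\|_{\vec H^1}\|\vec v\|_{\vec H^1}$ and the nonlinear term via H\"older and Sobolev by $B(\|\vec u\|_2+\|\vec u\|_{\ell+2}^{\ell+1})\|\vec v\|_{\ell+2}$. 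Since $\ell+1<1+\frac{4}{N}$, the elementary inequality $t^{\ell+1}\le 1+t^{1+4/N}$ after Sobolev embedding delivers the claimed constant $E$. Verbatim reasoning under $\mathcal A_4$ yields the analogous bound for $\mathcal J^\infty$.

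For part (\textit{ii}), on $\mathcal S_c$ one has $\|\vec u\|_2=c$, so $\s F(x,\vec u)\le A(c^2+\|\vec u\|_{\ell+2}^{\ell+2})$ by $\mathcal A_0$. Gagliardo--Nirenberg gives $\|\vec u\|_{\ell+2}\le C\|\nabla\vec u\|_2^{\sigma}\|\vec u\|_2^{1-\sigma}$ with $\sigma=\frac{N\ell}{2(\ell+2)}$; crucially $\sigma(\ell+2)=\frac{N\ell}{2}<2$, so with $q:=2/(2-\sigma(\ell+2))$ Young's inequality absorbs the gradient factor into $\frac{1}{4}\|\nabla\vec u\|_2^2$ plus a term of the form $c^{(1-\sigma)(\ell+2)q}$. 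This gives the lower bound for $\mathcal J$ with $A_1=\frac{1}{4}$. For $\mathcal J^\infty$ I would treat the two pieces of $\mathcal A_4$ separately, using $\sigma_1=\frac{N\beta}{2(\beta+2)}$ and $q_1:=2/(2-\sigma_1(\beta+2))$ for the $\|\vec u\|_{\beta+2}^{\beta+2}$ piece; both exponents are subcritical because $\beta<\ell<\frac{4}{N}$, and no $c^2$ term appears since $\mathcal A_4$ has no pure mass piece.

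Part (\textit{iii}) is then immediate: for any minimizing sequence $\{\vec u_n\}$, the upper bound on $\mathcal J(\vec u_n)$ combined with (\textit{ii}) forces $\|\nabla\vec u_n\|_2$ to be bounded, and together with $\|\vec u_n\|_2=c$ this yields $\vec H^1$-boundedness and $\mathcal I_c>-\infty$; likewise for $\mathcal J^\infty$. For part (\textit{iv}) I would use the $L^2$-preserving dilation $\vec u\mapsto\theta\vec u$ with $\theta=c'/c$: for $c_n\to c$ and $\vec u\in\mathcal S_c$, the vectors $\vec u_n:=(c_n/c)\vec u\in\mathcal S_{c_n}$ converge to $\vec u$ in $\vec H^1$, so continuity of $\mathcal J$ from (\textit{i}) yields $\mathcal J(\vec u_n)\to\mathcal J(\vec u)$; picking $\vec u$ nearly-minimizing for $\mathcal I_c$ gives $\limsup\mathcal I_{c_n}\le\mathcal I_c$. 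Conversely, if $\vec v_n\in\mathcal S_{c_n}$ is nearly minimizing, (\textit{iii}) combined with the continuous dependence on $c$ of the constants in (\textit{ii}) shows $\{\vec v_n\}$ is $\vec H^1$-bounded uniformly in $n$; rescaling $\vec w_n:=(c/c_n)\vec v_n\in\mathcal S_c$, the gradient bound from (\textit{i}) together with $\|\vec w_n-\vec v_n\|_{\vec H^1}=|c/c_n-1|\,\|\vec v_n\|_{\vec H^1}\to 0$ yields $|\mathcal J(\vec w_n)-\mathcal J(\vec v_n)|\to 0$, hence $\mathcal I_c\le\liminf\mathcal I_{c_n}$. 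The same argument handles $\mathcal I_c^\infty$. The only real obstacle is the careful bookkeeping of Gagliardo--Nirenberg exponents in (\textit{ii}); everything else is standard Nemytskii and perturbation theory.
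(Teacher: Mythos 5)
Your proposal is correct and follows essentially the same route as the paper: Nemytskii-operator estimates from $\mathcal A_0$/$\mathcal A_4$ for part (\textit{i}), Gagliardo--Nirenberg plus Young with the same exponents $\sigma=\frac{N\ell}{2(\ell+2)}$, $q=\frac{4}{4-N\ell}$ for part (\textit{ii}), and the $L^2$-preserving rescaling $\vec u\mapsto (c/c_n)\vec u$ combined with the gradient bound of (\textit{i}) for part (\textit{iv}). The only cosmetic difference is in (\textit{i}), where the paper splits $\partial_iF$ by a cutoff in $\vec s$ into pieces of pure growth $|\vec s|$ and $|\vec s|^{1+4/N}$ while you split the two-term bound directly via H\"older (note you want $t^{\ell+1}\le t+t^{1+4/N}$ rather than $1+t^{1+4/N}$ to land exactly on the stated form); both yield the same estimate.
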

\begin{proof}
We prove the first assertion. For that purpose, we introduce a cutoff function $\varphi : \mathbb{R}^m \rightarrow \mathbb{R}$ such that $\varphi(\vec{s}) = 1 $ if $ |\vec{s}| \leq 1$, $\varphi(\vec{s})= -|\vec{s}| + 2$ if $1 \leq |\vec{s}|$ and $\varphi(\vec{s})=0$ otherwise. Now, for all $1\leq i\leq m$, we introduce 
\begin{align*}
&\partial^1_i F(x,\vec{s}) = \varphi(\vec{s}) \partial_i F(x,
\vec{s}),
&|\partial^1_iF(x,\vec{s})| \leq B(1+2^{\ell+1})|\vec{s}|,\\
&\partial^2_i F(x,\vec{s}) = (1-\varphi(\vec{s}))\partial_i
F(x,\vec{s}),
& |\partial^2_i F(x,\vec{s})|\leq 2B |\vec{s}|^{1 +\frac{4}{N}}.
\end{align*}
Also, we let $p=\frac{2N}{N+2}$ if $N\geq 3$, $p=\frac43$ if $N\leq 2$ and $q=p\,\left(1+\frac4N\right)$. The definitions above imply that $\partial^1_i F(x,.) \in
C(\vec{L}^2,L^2)$, $\partial^2_i F(x,.) \in C(\vec{L}^q,L^p)$ and
there exists a constant $K > 0$ such that 
\begin{align*}&\nr\partial^1_i F(x,\vec{u})\nr_2 \leq K\,\nr\vec{u}\nr_2,
&\quad \forall \vec{u} \in \vec{L}^2, \\&\nr\partial^2_i F(x,\vec{u})\nr_p \leq K\,\nr\vec{u}\nr_q^{1
+ \frac{4}{N}}, &\quad
\forall\; \vec{u} \in \vec{L}^q.\end{align*}
Noticing that $\vec{H}^1$ is continuously embedded in $\vec{L}^q$ since $q \in [2,
2^\star]$ for $N \geq 3$ and $q \in [2,+\infty)$ for $N \leq
2$, and $\vec{L}^p$ is continuously embedded in $\vec{H}^{-1}$ since
$p' \in [2, 2^\star]$ for $N \geq 3$ and $p' \in [2,+\infty)$
for $N \leq 2$. We can assert that $\partial_i
F(x,.)+\partial^2_iF(x,.) \in C(\vec{H}^1,\vec{H}^{-1})$ and there
exists a constant $C > 0$ such that for all $\vec{u} \in \vec{H}^1$, it holds
\[\nr\partial_i F(x,\vec{u})\nr_{\vec{H}^{-1}} \leq C\left\{\nr\vec{u}\nr_{\vec{H}^1}
 + \nr\vec{u}\nr^{1+ \frac{4}{N}}_{\vec{H}^1}\right\}.\]
On the other hand
$$\s F(x,\vec{u})\,dx \leq A(\nr\vec{u}\nr^2_2 + \nr\vec{u}\nr^{\ell+2}_{\ell+2}) \leq
C\left(\nr\vec{u}\nr^2_{\vec{H}^1} + \nr\vec{u}\nr^{\ell+2}_{\vec{H}^1}\right)$$ which
implies that $\mathcal J \in C^1(\vec{H}^1,\mathbb{R})$ by standard arguments
of differential calculus . Thus 
$$\mathcal J'(\vec{u}) \vec{v} = \s\; \left(\sum^m_{i=1} \nabla u_i\;
 \nabla v_i - \partial_i F(x,\vec{u})v_i\right)\,dx,\quad \forall\; \vec{u},
 \vec{v} \in \vec{H}^1.$$
Therefore,
 $$\nr\mathcal J'(\vec{u})\nr_{\vec{H}^{-1}} \leq C\left\{\nr\vec{u}\nr_{\vec{H}^1} + \nr\vec{u}\nr^{1 +
 \frac{4}{N}}_{\vec{H}^1}\right\}, \quad \forall\; \vec{u} \in \vec{H}^1.$$
The assertion concerning the functional $\mathcal J^\infty$ can be proved similarly and we skip the proof for the sake of shortness. Now, we turn to the proof of the second assertion. Let $\vec{u}:=(u_1,...,u_m) \in S_c$. Using $\mathcal A_0$, we have 
\[\s F(x,\vec{u})\,dx \leq A c^2 +
A\displaystyle{\sum^m_{i=1}\s |u_i(x)|^{\ell+2}\,dx}.\] 
Now, let  $\sigma ={\frac{N}{2} \frac{\ell}{\ell+2}}$, then for all  $1 \leq i \leq m$, thanks to the Gagliardo-Nirenberg inequality, we have
\begin{equation}\nr u_i\nr^{\ell+2}_{\ell+2} \leq A" \nr u_i\nr_2^{(1-\sigma)(\ell+2)}
\nr\nabla u_i\nr^{\sigma(\ell+2)}_2. \label{eq3.4}
\end{equation}
Next, letting $\varepsilon > 0,\;p = \frac{4}{N\ell}$ and $q$ such that
$\displaystyle{\frac{1}{p} + \frac{1}{q}} = 1$, then Young's inequality leads to 
$$\nr u_i\nr^{\ell+2}_{\ell+2} \leq \left\{ \frac{A"}{\varepsilon}
\nr u_i\nr_2^{(1-\sigma)(\ell+2)}\right\}^q\frac{1}{q} +
\frac{N\ell}{4}\{\varepsilon^{\frac{4}{N\ell}}\nr\nabla u_i\nr^2_2\}.$$
Consequently, 
\[\mathcal J(\vec{u}) \geq \left\{\frac{1}{2} -\frac{AN\ell}{4}
\varepsilon^{\frac{4}{N\ell}} \right\} \nr\nabla \vec{u}\nr^2_2 - A^2
c^2- \frac{AA"^q}{q\varepsilon^q} m
c^{(1-\sigma)(\ell+2)q}.\] 
Taking $\varepsilon$ such that
$\displaystyle{\frac{1}{2} -
\frac{AN\ell}{4}}\varepsilon^{\frac{4}{N\ell}} \geq 0$, we prove
that $\mathcal J$ is bounded from below in $\vec{H}^1$. To show that all
minimizing sequences of $(\mathcal I)$ are bounded in $\vec{H}^1$, it suffices
to take the latter inequality with a strict sign.
\begin{rem}
On the one hand, if we allow $\ell = \frac{4}{N}$ in $\mathcal A_0$, the minimization problem $(\mathcal I)$
 makes sense for sufficiently small values of $c$ since in \eqref{eq3.4}, we then have
$\sigma = \frac{2}{\ell+2}$ and $(1-\sigma)(\ell+2) = \frac{4}{N}$.
Therefore
\begin{eqnarray*}
\nr u_i\nr^{\ell+2}_{\ell+2} \leq A" c^{(1-\sigma)(\ell+2)}\nr\nabla u_i\nr^2_2\\
\leq A" c^{4/N} \nr\nabla u_i\nr^2_2.
\end{eqnarray*}
$$\mathcal J(\vec{u}) \geq \left\{ \frac{1}{2} - AA" c^{4/N}\right\} \nr\nabla
\vec{u}\nr^2_2 - Ac^2.$$
Thus, if $c < (\frac{1}{2AA"})^{N/4}$, the minimization problem $(\mathcal I)$
is still well-posed. On the other hand, if $\ell > \frac{4}{N}$, we can prove that $\mathcal I_{c} = -
\infty$.
\end{rem}
Next, under a slight modifications of the argument we used above, we can easily obtain for $\mathcal J^\infty$ the following estimate
\begin{eqnarray*}
\mathcal J^\infty (\vec{u}) &\geq& \Big\{\frac{1}{2} - A^{(3)}
\varepsilon^{\frac{4}{N\ell}}\Big\} \nr\nabla \vec{u}\nr^2_2 -
\frac{A^{(4)}m}{q_1\varepsilon^{q_1}} c^{(1-\sigma_1)(\beta+2)q_1}\\
&-& \frac{A^{(5)}mc^{(1-\sigma)(\ell+2)q}}{q\varepsilon^q},
\end{eqnarray*}
with $\sigma= \displaystyle{\frac{N}{2} \frac{\beta}{\beta+2}}$
and $\sigma_1 = \displaystyle{\frac{N}{2} \frac{\ell}{\ell +2}}$ and
$q_1$ is also defined as in the previous proof. The assertion $iii)$ is a straightforward consequence of the estimates of the second point. Therefore, we are kept with the proof of the last point. Consider $c > 0$ and a sequence
$\{c^n\}_{n\in\mathbb N}$ such that $c^n \rightarrow c$. For any $n$, there exist $u_{n} \in
\mathcal S_{c^n}$ such that
$$\mathcal I_{c^n} \leq \mathcal J(u_{n,1},...,u_{n,m})
 \leq \mathcal I_{c^n} + \frac{1}{n}.$$
Thanks to the first estimate of the assertion $ii)$, we can easily see that there exists a constant $K >
0$ such that $\nr\vec{u}_n\nr_{\vec{H}^1} \leq K$ for all $ n \in
\mathbb{N}$. Now, we introduce $\vec{w}_n =
(w_{n,1},...,w_{n,m})$ where $w_{n} = \frac{c}{c^n} u_{n}$. Then, we have obviously $\vec{w}_n \in \mathcal S_c$ and 
\begin{eqnarray*}
\nr\vec{u}_n - \vec{w}_n\nr_{\vec{H}^1} &\leq & \left|\frac{c}{c^n}-1\right|\nr\vec(u)_{n}\nr_{H^1}.
\end{eqnarray*}
In particular, there exists $n_1$ such that
$$\nr\vec{u}_n - \vec{w}_n\nr_{\vec{H}^1} \leq K +1\quad \mbox{ for all } n \geq n_1.$$
Now, it follows from the first assertion that 
\[\nr \mathcal J'(\vec{u})\nr_{\vec{H}^{-1}} \leq L(K)\quad \mbox{ for } \nr\vec{u}\nr_{\vec{H}^1}
\leq 2K+1.\]
Therefore for all $n \geq n_1$, we have 
\begin{eqnarray*}
|\mathcal J(\vec{w}_n)-\mathcal J(\vec{u}_n)| &=& |\int^1_0 \frac{d}{dt} \mathcal J(t\vec{w}_n
+
(1-t)\vec{u}_n)dt|,\\
&\leq& \sup_{\nr\vec{u}\nr_{\vec{H}^1} \leq 2K+1}
|J'(\vec{u})|_{\vec{H}^{-1}}\nr\vec{u}_n - \vec{w}_n\nr_{\vec{H}^1},\\
&\leq& L(K) K  \left|1 - \frac{c}{c^n}\right|.
\end{eqnarray*}
Eventually, we have
\[\mathcal I_{c} \geq \mathcal J(\vec{u}_n) - \frac{1}{n} \geq \mathcal J(\vec{w}_n) +KL(K)
 \left|1-\frac{c}{c^n}\right| - \frac{1}{n}.\]
 Thus $\liminf_{n\rightarrow +\infty} \mathcal I_{c^n} \geq \mathcal I_{c}$. On the other hand, there exists a sequence $\vec{u}_n \in \mathcal S_c$ such that $\mathcal J(\vec{u}_n)\stcn \mathcal I_{c}$ and, thanks to the first assertion, there exists $K > 0$ such that $\nr\vec{u}_n\nr_{\vec{H}^1} \leq
 K$. Now, we set $w_{n} = \displaystyle{\frac{c_n}{c}}u_{n}$. following the argument above, we have
 $\vec{w}_n =
 (w_{n,1,...,}w_{n,m})
 \in \mathcal S_{c_n}$, $c_n = (c^1_n,...,c^m_n)$
 and
 $$\nr\vec{u}_n - \vec{w}_n\nr_{\vec{H}^1}
 \leq K\left|1 - \frac{c_n}{c}\right| \nr\vec(u)_{n}\nr_{H^1}.$$
 Once again, as done previously, we get 
 $$|\mathcal J(\vec{w}_n) -\mathcal J(\vec{u}_n)| \leq K L(K) \left|1 - \frac{c_n}{c}\right|,$$
 which implies that
 $$\mathcal I_{c} \leq \mathcal J(\vec{w}_n) \leq \mathcal J(\vec{u}_n)+L(K)K 
 \left|1 - \frac{c_n}{c}\right|.$$
 Thus $\limsup_{n\rightarrow +\infty} \mathcal I_{c^n} \leq \mathcal I_{c}$ and we
 conclude. The equivalent assertion for $\mathcal I_\infty$ follows using the same argument.
\end{proof}
\noindent We shall need the following second technical Lemma
\begin{lem}\label{Lemma 3.2}
Let $F$ such that $\mathcal A_0$ and $\mathcal A_1$ hold, then $\mathcal I_c<0$.
\end{lem}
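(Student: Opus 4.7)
The plan is to construct an explicit test function in $\mathcal S_c$ whose energy $\mathcal J$ is strictly negative, by exploiting the pointwise lower bound $\mathcal A_1$ through an $L^2$-preserving dilation. The condition $t<N(1-\alpha/2)+2$ is precisely what will make the nonlinear contribution dominate the kinetic term at small scales.

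First, I fix any $\vec u=(u_1,\dots,u_m)\in\mathcal S_c$ whose components are smooth, compactly supported in a common annulus $\{R_0\leq|y|\leq R_1\}$ bounded away from the origin, and strictly positive on a common nonempty open subset of that annulus. Then I set $\vec u_\theta(x)=\theta^{N/2}\vec u(\theta x)$ for $\theta>0$, which still lies in $\mathcal S_c$ since the $L^2$ norm is invariant under this scaling. A change of variables gives $\nr\nabla\vec u_\theta\nr_2^2=\theta^2\,\nr\nabla\vec u\nr_2^2$ and
\[
\s F(x,\vec u_\theta)\,dx\;=\;\theta^{-N}\s F\!\left(\tfrac{y}{\theta},\,\theta^{N/2}\vec u(y)\right)dy.
\]

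For $\theta$ sufficiently small, the support condition gives $|y/\theta|\geq R_0/\theta\geq R$ uniformly, and the amplitude bound $|\theta^{N/2}\vec u(y)|\leq \theta^{N/2}\nr\vec u\nr_\infty<S$ also holds uniformly in $y$; hence $\mathcal A_1$ applies pointwise on the support of $\vec u$, while $F(\cdot,0)=0$ by $\mathcal A_0$ elsewhere. This yields
\[
\s F(x,\vec u_\theta)\,dx\;\geq\;\Delta\,\theta^{\,t-N+N\alpha/2}\,\s |y|^{-t}\prod_{i=1}^m|u_i(y)|^{\alpha_i}\,dy\;=:\;\Delta\,C_0\,\theta^{\,t-N+N\alpha/2},
\]
where $C_0$ is finite and strictly positive because the integrand is continuous, bounded, supported in a compact annulus disjoint from the origin, and strictly positive on a common open set. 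Combining the two estimates,
\[
\mathcal J(\vec u_\theta)\;\leq\;\tfrac{1}{2}\theta^{2}\,\nr\nabla\vec u\nr_2^{2}\;-\;\Delta C_0\,\theta^{\,t-N+N\alpha/2}.
\]
The exponent condition $t-N+N\alpha/2<2$ encoded in $\mathcal A_1$ forces the second term to dominate as $\theta\to 0^+$, so $\mathcal J(\vec u_\theta)<0$ for all sufficiently small $\theta$, and therefore $\mathcal I_c\leq \mathcal J(\vec u_\theta)<0$.

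No substantial obstacle is expected: the only real content is matching the scaling exponent $t-N+N\alpha/2$ against the kinetic exponent $2$, and $\mathcal A_1$ is tailored exactly for this inequality. The book-keeping consists in arranging a single test function whose components are simultaneously in $\mathcal S_c$, share a region of strict positivity, and have supports bounded away from the origin—an elementary construction by rescaling a product of positive bumps.
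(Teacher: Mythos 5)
Your proof is correct and follows essentially the same route as the paper: an $L^2$-preserving dilation $\theta^{N/2}\vec u(\theta x)$, the lower bound $\mathcal A_1$ applied for small $\theta$, and the exponent comparison $t-N+N\alpha/2<2$ against the kinetic exponent $2$. If anything you are slightly more careful than the paper, which uses a radial decreasing profile and restricts the integral to $|x|\ge R$ but does not explicitly verify the smallness condition $|\vec s|<S$ needed to invoke $\mathcal A_1$; your choice of a support bounded away from the origin handles both points cleanly.
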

\begin{proof}
Let $\varphi$ be a radial and radially decreasing
 function such that $\nr\varphi\nr_2 = 1$ and we  set $\varphi_i = c_i
 \varphi$. Also, let $0 <\lambda \ll 1$ and $\vec{\Phi}_\lambda(x) = \lambda^{N/2}
 \vec{\Phi}(\lambda x) :=
 \lambda^{N/2} (\varphi_1(\lambda x)),...,\varphi_m(\lambda x))$. Then, we have 
 \begin{eqnarray*}
 \mathcal J(\vec{\Phi}_\lambda) &=& \lambda^2 \nr\nabla \vec{\Phi}\nr^2_2 - \s
 F(x, \lambda^{N/2} \varphi_1(\lambda x),...,\lambda^{N/2}
 \varphi_m(\lambda x))dx,\\
 &\leq& \lambda^2 \nr\nabla \vec{\Phi}\nr^2_2 - \int_{|x|\geq R}\,F(x, \lambda^{N/2} \varphi_1(\lambda x),...,\lambda^{N/2}
 \varphi_m(\lambda x))dx,\\
 &\leq&\lambda^2 \nr\nabla\vec{\Phi}\nr^2_2 - \lambda^{\frac{N}{2}\alpha}
 \Delta \int_{|x|\geq R}|x|^{-t_i} \varphi_1^{\alpha_1}(\lambda
 x)...\varphi_m^{\alpha_m}(\lambda x)dx.
 \end{eqnarray*}
Applying the change of variable $y = \lambda x$ leads to 
that 
$$\mathcal J(\vec{\Phi}_\lambda) \leq \lambda^2 \nr\nabla \vec{\Phi}\nr^2_2
 - \lambda^{\frac{N}{2}\alpha} \lambda^{-N}\Delta
 \lambda^{t_i}\int_{|y|
 \geq \lambda R}
 \varphi_1^{\alpha_1}(y),..., \varphi_m^{\alpha_m}(y)dy.$$
 Now, since $0 < \lambda \ll 1$, we get
 \begin{eqnarray*}
 \mathcal J(\vec{\Phi}_\lambda) &\leq& \lambda^2 \nr\nabla \vec{\Phi}\nr^2_2 -
 \lambda^{\frac{N}{2} -N+t_i} \int_{|y| \geq R} |y|^{-t_i}
 \varphi_1^{\alpha_1}(y) ...\varphi_m^{\alpha_m}(y)dy,\\
 &\leq& \lambda^2 \{C_1 - \lambda^{\frac{N}{2} \alpha -N + t_i-2}C_2\}.
 \end{eqnarray*}
 The result follows after observing that $\lambda \ll 1$ and $\frac{N}{2} \alpha -N+t_i-2 >
 0$.
 \end{proof}
\begin{rem}
The strict negativity of the infimum is also discussed in Ref. \cite{H} where the author provides other type of assumption ensuring this.
\vskip6pt
 \noindent Now, we have the following Lemma and we refer to Ref. \cite{PL2} for a proof.
\end{rem}
\begin{lem}\label{Lemma 3.3}
We have the following facts
\begin{itemize}
\item[{\it i})] If $F$ satisfies $\mathcal A_0,\mathcal A_1$ and $\mathcal A_2$, then for all $c>0$ and all $a\in(0,c)$ it holds $\mathcal I_{c} \leq \mathcal I_{a}+\mathcal I_{c-a}$.
 \item[{\it ii})] If $F$ satisfies $\mathcal A_2, \mathcal A_4$ and $\mathcal A_1$ holds true for $F^\infty$, then for all $c>0$ and all $a\in(0,c)$ it holds $\mathcal I^\infty_{c} < \mathcal I^\infty_{a} + \mathcal I^\infty_{c-a}$.
 \end{itemize}
\end{lem}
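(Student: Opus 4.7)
The plan is to derive both items from a single scaling identity combined with Lemma \ref{Lemma 3.2}. Assumption $\mathcal{A}_2$ implies, via the pointwise estimate $F(x,\theta\vec{u})\ge\theta^2 F(x,\vec{u})$ for $\theta\ge 1$ and the quadratic scaling of the Dirichlet integral, that $\mathcal{J}(\theta\vec{u})\le\theta^2\mathcal{J}(\vec{u})$ for every $\vec u\in\vec H^1$ and $\theta\ge 1$. Applied with $\theta=c/a\ge 1$ to any $\vec{u}\in\mathcal{S}_a$ (so that $\theta\vec u\in \mathcal{S}_c$), and then taking the infimum over $\vec u$, this gives $\mathcal{I}_c\le(c/a)^2\mathcal{I}_a$, or equivalently that the map $c\mapsto\mathcal{I}_c/c^2$ is non-increasing on $(0,+\infty)$.

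For part (i), I use this monotonicity at the points $a$ and $c-a$ to obtain $\mathcal{I}_a\ge(a/c)^2\,\mathcal{I}_c$ and $\mathcal{I}_{c-a}\ge((c-a)/c)^2\,\mathcal{I}_c$. Summing the two, using the elementary identity $a^2+(c-a)^2=c^2-2a(c-a)\le c^2$, and exploiting $\mathcal{I}_c<0$ from Lemma \ref{Lemma 3.2}, I conclude
$$\mathcal{I}_a+\mathcal{I}_{c-a}\ge \frac{a^2+(c-a)^2}{c^2}\,\mathcal{I}_c\ge\mathcal{I}_c,$$
which establishes (i).

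For part (ii) the same three-step scheme is carried out with the exponent $\sigma+2$ in place of $2$: using $\mathcal{A}_5$ together with $F^\infty\ge 0$ from $\mathcal{A}_4$, the estimate $\mathcal{J}^\infty(\theta\vec u)\le\tfrac{\theta^2}{2}\nr\nabla\vec u\nr_2^2-\theta^{\sigma+2}\s F^\infty(x,\vec u)\,dx\le\theta^{\sigma+2}\mathcal{J}^\infty(\vec u)$ holds for $\theta\ge 1$ (the second inequality absorbs the gap $\theta^{\sigma+2}\ge\theta^2$ against the nonnegative gradient term), so $c\mapsto\mathcal{I}^\infty_c/c^{\sigma+2}$ is non-increasing on $(0,+\infty)$. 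The strict negativity $\mathcal{I}^\infty_c<0$ is obtained by re-running the test-function argument of Lemma \ref{Lemma 3.2} with $F^\infty$ in place of $F$, which is legitimate because $\mathcal{A}_1$ is assumed for $F^\infty$. Summing the corresponding bounds for $\mathcal{I}^\infty_a$ and $\mathcal{I}^\infty_{c-a}$ yields
$$\mathcal{I}^\infty_a+\mathcal{I}^\infty_{c-a}\ge \Bigl[(a/c)^{\sigma+2}+((c-a)/c)^{\sigma+2}\Bigr]\,\mathcal{I}^\infty_c.$$
The upgrade to a strict inequality now comes from the strict convexity of $t\mapsto t^{\sigma+2}$ on $[0,1]$, which holds for every $\sigma\ge 0$ since $\sigma+2>1$: the chord bound $t^{\sigma+2}<t$ for $t\in(0,1)$ gives $(a/c)^{\sigma+2}+((c-a)/c)^{\sigma+2}<1$, and multiplying the negative number $\mathcal{I}^\infty_c$ by a factor strictly less than $1$ flips the bracket and produces the strict form $\mathcal{I}^\infty_c<\mathcal{I}^\infty_a+\mathcal{I}^\infty_{c-a}$.

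The only place where I anticipate a genuine subtlety is the scaling step for $\mathcal{J}^\infty$ when $\sigma>0$, because the gradient and potential pieces now carry different exponents ($\theta^2$ and $\theta^{\sigma+2}$ respectively); the whole argument hinges on the sign information $F^\infty\ge 0$ from $\mathcal{A}_4$ to reconcile them. Once this is in place, the remaining ingredients — monotonicity of $\mathcal{I}^\infty_c/c^{\sigma+2}$, strict convexity of $t\mapsto t^{\sigma+2}$, and the adaptation of Lemma \ref{Lemma 3.2} to $F^\infty$ — are elementary.
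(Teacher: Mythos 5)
Your proof is correct, and it is essentially the scaling/homogeneity argument the paper has in mind: the paper gives no proof of Lemma \ref{Lemma 3.3} at all (it defers to Ref.~\cite{PL2}), but the same mechanism is sketched in the introduction for the scalar case, where subadditivity is said to be ``immediately derived'' from $f(x,\theta s)\ge\theta^2 f(x,s)$. Two small remarks: in part ({\it ii}) you necessarily invoke $\mathcal A_5$, which is not among the hypotheses listed in the lemma (it lists $\mathcal A_2$, a condition on $F$ rather than on $F^\infty$) --- this is evidently a typo in the statement, since Theorem \ref{thm2} and Lemma \ref{Lemma 3.4} both require $\mathcal A_5$ for the problem at infinity, and without some superquadratic homogeneity of $F^\infty$ the claim cannot be reached this way. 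Also, the step reconciling the exponents $\theta^2$ and $\theta^{\sigma+2}$ rests on the nonnegativity of the Dirichlet term, not on $F^\infty\ge 0$ (which you instead need, together with $\mathcal A_1$ for $F^\infty$, to rerun Lemma \ref{Lemma 3.2} and get $\mathcal I^\infty_c<0$); this mis-attribution is harmless.
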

\noindent As a consequence, we have the following 
\begin{lem}\label{Lemma 3.4}
Let $F$ satisfies $\mathcal A_0, \mathcal A_1, \mathcal A_2$ and $\mathcal A_5, \mathcal A_1$ hold true for $F^\infty$, then for all $c>0$ and all $a\in(0,c)$ we have  \[I_{c} < I_{a} + I^\infty_{c-a}.\]
\end{lem}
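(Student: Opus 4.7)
The plan is to derive Lemma \ref{Lemma 3.4} by upgrading the weak subadditivity of Lemma \ref{Lemma 3.3} (i) through the strict comparison
\[
\mathcal I_{c-a} < \mathcal I_{c-a}^\infty.
\]
Once this gap is available, Lemma \ref{Lemma 3.3} (i) immediately gives $\mathcal I_c \leq \mathcal I_a + \mathcal I_{c-a} < \mathcal I_a + \mathcal I_{c-a}^\infty$, which is the desired conclusion. So the whole argument reduces to establishing the gap $\mathcal I_{c-a} < \mathcal I_{c-a}^\infty$, i.e.\ the vectorial counterpart of step \emph{v} in the introduction.

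First I would invoke the existence of a minimizer $\vec u^\infty \in \mathcal S_{c-a}$ for $(\mathcal I_\infty)$ at mass $c-a$. This is exactly Theorem \ref{thm2} (step \emph{iv} of the program) and the required ingredients are in place: Lemma \ref{Lemma 3.1} furnishes the boundedness of minimizing sequences and the continuity of $c\mapsto \mathcal I_c^\infty$, the $F^\infty$-analogue of Lemma \ref{Lemma 3.2} (which requires $\mathcal A_1$ for $F^\infty$ together with $\mathcal A_4$, $\mathcal A_5$) gives $\mathcal I_{c-a}^\infty <0$ and thus rules out vanishing, while the strict subadditivity of Lemma \ref{Lemma 3.3} (ii) rules out dichotomy. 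Concentration-compactness then produces a subsequence of any minimizing sequence that, up to translation, converges strongly in $\vec H^1$ to a minimizer $\vec u^\infty$.

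With $\vec u^\infty$ in hand, I appeal to $\mathcal A_6$. The pointwise bound $F^\infty \leq F$ yields $\mathcal J(\vec v)\leq \mathcal J^\infty(\vec v)$ for every $\vec v \in \vec H^1$; in particular $\mathcal J(\vec u^\infty)\leq \mathcal J^\infty(\vec u^\infty)=\mathcal I_{c-a}^\infty$. The delicate issue is that this plain substitution need not be strict, since the set on which $F>F^\infty$ may not intersect the effective support of $\vec u^\infty$. To force strictness I would exploit the $\mathbb Z^N$-periodicity of $F^\infty$: for every $z\in\mathbb Z^N$, $\vec u^\infty(\cdot-z)$ is still a minimizer of $\mathcal J^\infty$ on $\mathcal S_{c-a}$. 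Since $\vec u^\infty$ has positive $L^2$-mass, $\{\vec u^\infty\neq 0\}$ has positive measure, and a Fubini/averaging argument applied to
\[
z \longmapsto \int_{\mathbb R^N}\bigl(F(x,\vec u^\infty(x-z))-F^\infty(x,\vec u^\infty(x-z))\bigr)\,dx
\]
over $z$ in a large box shows that the average is strictly positive, hence there exists a shift $z_0$ with strictly positive integral. Setting $\vec v^\infty := \vec u^\infty(\cdot-z_0)\in \mathcal S_{c-a}$, one obtains $\mathcal J(\vec v^\infty) < \mathcal J^\infty(\vec v^\infty)=\mathcal I_{c-a}^\infty$, whence $\mathcal I_{c-a}\leq \mathcal J(\vec v^\infty) < \mathcal I_{c-a}^\infty$, and combining with Lemma \ref{Lemma 3.3} (i) closes the argument.

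The main obstacle is this translation/averaging step. Without translational freedom, $\mathcal A_6$ alone would only give a non-strict comparison of integrals, since the pointwise strict-inequality set could in principle avoid the values actually taken by the minimizer. The periodicity hypothesis on $F^\infty$ is precisely what produces an orbit of equally valid minimizers $\{\vec u^\infty(\cdot-z):z\in\mathbb Z^N\}$ among which one can select a representative whose profile samples the region of strict inequality on a set of positive measure.
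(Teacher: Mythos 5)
Your proposal follows the paper's own route exactly: the paper states Lemma \ref{Lemma 3.4} as an immediate consequence of the weak subadditivity $\mathcal I_{c} \leq \mathcal I_{a} + \mathcal I_{c-a}$ of Lemma \ref{Lemma 3.3}~({\it i}) combined with the strict gap $\mathcal I_{c-a} < \mathcal I^\infty_{c-a}$, the latter obtained by evaluating $\mathcal J$ at a minimizer of $(\mathcal I_\infty)$ furnished by Theorem \ref{thm2} and invoking $\mathcal A_6$; this is precisely the chain \eqref{eq1.11}, \eqref{eq1.12}, \eqref{eq1.15} $\Rightarrow$ \eqref{eq1.10} laid out in the introduction. (Note in passing that both you and the paper are using $\mathcal A_4$ and $\mathcal A_6$ here, neither of which appears in the hypothesis list of the lemma as stated.) Where you go beyond the paper is in noticing the point it glosses over: $\mathcal A_6$ gives $F^\infty \leq F$ with strict inequality only on a positive-measure set of $(x,\vec s)$, so the paper's blanket assertion $\mathcal J(\vec u) < \mathcal J^\infty(\vec u)$ for all $\vec u$ is not justified, and in particular need not hold at the minimizer $\vec u^\infty$. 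You are right that this is the delicate step.

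However, your translation/averaging repair does not close the hole. The average over $z$ of $\int \bigl(F-F^\infty\bigr)(x,\vec u^\infty(x-z))\,dx$ is strictly positive only if, for a positive-measure set of $x$, the translates of $\vec u^\infty$ actually take values in the slice $E_x=\{\vec s : F(x,\vec s)>F^\infty(x,\vec s)\}$ on a set of positive measure. Nothing in $\mathcal A_6$ prevents $E$ from being concentrated at values of $\vec s$ that $\vec u^\infty$ never attains --- for instance at $|\vec s|$ exceeding $\|\vec u^\infty\|_\infty$, or, when $m\geq 2$, off the essentially lower-dimensional range of $\vec u^\infty$ in $\mathbb R^m$ --- in which case every integrand in your average vanishes identically and no shift $z_0$ exists. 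Translating the minimizer varies the $x$-argument sampled, but does nothing to enlarge the set of $\vec s$-values sampled, which is where the obstruction lives. So as written the averaging step only reproduces the non-strict inequality $\mathcal I_{c-a}\leq \mathcal I^\infty_{c-a}$. Closing the gap genuinely requires a stronger hypothesis (e.g.\ $F(x,\vec s)>F^\infty(x,\vec s)$ for all $\vec s\neq 0$ and all $x$ in a set of positive measure, together with the fact that the minimizer does not vanish a.e.\ on that set). This defect is inherited from the paper rather than introduced by you, but your fix should not be presented as resolving it.
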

\section{Proof of Theorem \ref{thm2}}
 This section is devoted to the proof of Theorem \ref{thm2}. For that purpose, let $\{\vec{u}_n\}_{n\in\mathbb N}$ be a minimizing sequence of the problem $(\mathcal I_\infty)$. We proceed by concentration-compactness scenario's elimination. First of all, we prove that vanishing does not occur. We proceed by contradiction and assume that vanishing holds true. Therefore, using Lemma I.1 of Ref. \cite{PL2} that
  $\nr|\vec{u}_n|\nr_p \stcn 0$ as for all $p \in (2,2^\star)$. Thanks to assumption $\mathcal A_4$, we have 
  $$\s F^\infty(x, \vec{u}_n(x))\,dx \leq
  \left\{\nr|\vec{u}_n|\nr^{\beta+2}_{\beta+2}+\nr|\vec{u}_n|
  \nr^{\ell+2}_{\ell+2} \right\}.$$
  Therefore, $ \s\;F^\infty(x, \vec{u}_n(x) \,dx \stcn 0 $, hence $\liminf_{n\rightarrow + \infty} \mathcal J^\infty(\vec{u}_n) \geq 0$,
  contradicting the fact that $I^\infty_{c} < 0$. Thus vanishing does not occur. Now, we use the notation introduced in the appendix and eliminate the dichotomy scenario. For all $n \geq n_0 $, we have  
  \begin{eqnarray*}
  \lefteqn{\mathcal J^\infty(\vec{u}_n) - \mathcal J^\infty(\vec{v}_n)-\mathcal J^\infty(\vec{w}_n)
  =\displaystyle{\frac{1}{2} \s}\left(|\nabla \vec{u}_n|^2 - |\nabla \vec{v}_n|^2\right)\,dx}\\
  &-&  \displaystyle{\s} \left(F^\infty(x,\vec{u}_n) -
  F^\infty(x,\vec{v}_n)-F^\infty(x,\vec{w}_n) \right)\,dx,\qquad\qquad\qquad\\
  &=&\displaystyle{\frac{1}{2} \s}\left(|\nabla \vec{u}_n|^2 - |\nabla \vec{v}_n|^2\right)\,dx- \displaystyle{\s} \left(F^\infty(x,\vec{u}_n) -
  F^\infty(x,\vec{v}_n+\vec{w}_n)\right)\,dx,\\
  &\geq&  - \varepsilon - \displaystyle{\s} \left(F^\infty(x,\vec{u}_n) -
  F^\infty(x,\vec{v}_n+\vec{w}_n)\right)\,dx.\\
  \end{eqnarray*}
In the estimate above, we used the fact that $\textrm{Supp}\: \vec{v}_n \cap \textrm{Supp}\: \vec{w}_n = \emptyset$. Now since the sequences $\{\vec{w}_n\}_{n\in\mathbb N}, \{\vec{v}_n\}_{n\in\mathbb N}$ and $\{\vec{w}_n\}_{n\in\mathbb N}$ are
  bounded in $\vec{H}^1$, it follows from the proof of Lemma \ref{Lemma 3.1}
  that there exist $C, K > 0$ such that 
  \begin{eqnarray*}
  |\displaystyle{\s} \left(F^\infty(x, \vec{u}_n)\right.&-&\left.F^\infty(x,\vec{v}_n +
  \vec{w}_n)\right)\,dx| \\ &\leq& \sup_{\nr\vec{u}\nr_{\vec{H}^1} \leq K}\displaystyle{\sum^m_{i=1}}
  \nr\partial_iF^\infty(x,\vec{u})\nr_{\vec{H}^{-1}}\nr\vec{u}_n-(\vec{v}_n +
  \vec{w}_n)\nr_{\vec{H}^1}, \\
 & \leq& \displaystyle{\sup_{\nr\vec{u}\nr_{\vec{H}^1}\leq K}\sum^m_{i=1}}
\nr\partial^1_i F^\infty(x,\vec{u})\nr_{\vec{L}^2}\nr\vec{u}_n -
(\vec{v}_n+\vec{w}_n)\nr_{\vec{L}^2},\\
&+& \displaystyle{\sup_{\nr\vec{u}\nr_{\vec{H}^1}\leq K} \sum^m_{i=1}}
\nr\partial^2_i F^\infty(x,\vec{u})\nr_{\vec{L}^p} \nr\vec{u}_n -
(\vec{v}_n +
\vec{w}_n)\nr_{\vec{L}^{p'}}, \\
&\leq& C \displaystyle{\sup_{\nr\vec{u}\nr_{\vec{H}^1} \leq K}}
\nr\vec{u}\nr_{\vec{L}^2} \nr\vec{u}_n-(\vec{v}_n +
\vec{w}_n)\nr_{\vec{L}^2}\\
&+&C \displaystyle{\sup_{\nr\vec{u}\nr_{\vec{H}^1}\leq K}} \nr\vec{u}\nr^{1 +
\frac{4}{N}}_{L^q} \nr\vec{u}_n - (\vec{v}_n +
\vec{w}_n)\nr_{\vec{L}^{p'}}, \\
&\leq& C_1 K\nr\vec{u}_n - (\vec{v}_n + \vec{w}_n)\nr_{\vec{L}^2} \\ &+& C_2
K^{1 + \frac{4}{N}} \nr\vec{u}_n - (\vec{v}_n +
\vec{w}_n)\nr_{\vec{L}^{p'}}.
\end{eqnarray*}
Thus, we get 
\begin{align*}
\mathcal J^\infty(\vec{v}_n) - \mathcal J^\infty(\vec{v}_n)-\mathcal J^\infty(\vec{w}_n)
  &\geq - \varepsilon -C_1 K\nr\vec{u}_n -
  (\vec{v}_n + \vec{w}_n)\nr_{\vec{L}^2} \\ &- C_2 K^{1 + \frac{4}{N}}
  \nr\vec{u}_n - (\vec{v}_n + \vec{w}_n)\nr_{\vec{L}^{p'}}.
  \end{align*}
 Given any $\delta > 0$, using the properties of the sequences $\{\vec{v}_n\}_{n\in\mathbb N}$ and
  $\{\vec{w}_n\}_{n\in\mathbb N}$, we can find $\varepsilon_\delta \in (0,\delta)$ such that 
   \[ 
   \mathcal J^\infty(\vec{u}_n) - \mathcal J^\infty(\vec{v}_n)
  -\mathcal J^\infty(\vec{w}_n) \geq -\delta.
  \] 
  Now let $a^2_{n} (\delta) = \s v^2_{n}\,dx$ and $b^2_{n} (\delta) = \s w^2_{n}\,dx$, passing to a subsequences if necessary, we may suppose that $a^2_{n}(\delta) \stcn a^2(\delta)$ and
 $b_{n}^2 (\delta)\stcn b^2(\delta)$
  where $|a^2(\delta)-a^2|\leq \varepsilon_\delta < \delta$
and $|b^2(\delta) - (c^2-a^2)| \leq \varepsilon_\delta <
\delta$. Recalling that $c\mapsto \mathcal I^\infty_{c}$ is continuous, we find that
\begin{eqnarray*}
\mathcal I^\infty_{c} \geq \lim_{n\rightarrow + \infty}
\mathcal J^\infty(\vec{u}_n)&\geq& \liminf \{\mathcal J^\infty(\vec{v}_n) +
\mathcal J^\infty(\vec{w}_n)\} - \delta,\\
&\geq& \lim\inf \{\mathcal I^\infty_{a,(\delta)} +
\mathcal I^\infty_{b,(\delta)}\} - \delta,\\
&\geq& \mathcal I^\infty_{a,(\delta)} +
\mathcal I^\infty_{b,(\delta)}\} - \delta.
\end{eqnarray*}
Eventually, letting $\delta$ goes to zero and using again the continuity of
$\mathcal I^\infty_{c}$, we get
$$\mathcal I^\infty_{c} \geq \mathcal I^\infty_{a} +
\mathcal I^\infty_{\sqrt{c^2-a^2}}.$$
This contradicts  Lemma \ref{Lemma 3.3}. Thus, dichotomy does not occur and we conclude that the compactness scenario holds true. Hence, there exists $\{y_n\}_{n\in\mathbb N} \subset
\mathbb{R}^N$ such that for all $\varepsilon > 0$ we have
$$\int_{B(y_n,R(\varepsilon))} u^2_{n} \,dx\geq c^2 - \varepsilon.$$
 For all $n \in \mathbb{N}$, we can choose $z_n \in \mathbb{Z}^N$
 such that  $y_n - z_n \in
 [0,1]^N$. Now we set $\vec{v}_n(x) = \vec{u}_n(x+z_n)$, we certainly have that
 $\nr\vec{v}_n\nr_{\vec{H}^1} = \nr\vec{u}_n\nr_{\vec{H}^1}$ is bounded. Therefore, passing to a subsequence if necessary, we may assume that $\vec{v}_n \rightharpoonup
 \vec{v}$ in $\vec{H}^1$. In particular $\vec{v}_n \rightharpoonup
 \vec{v}$ weakly in $\vec{L}^2$ and $\nr v_{n}\nr^2_2 = c^2$.
 However, we have 
 \begin{eqnarray*}
 \s\; |\vec{v}|^2\, dx \geq \int_{B(0,R(\varepsilon)
 +\sqrt{N})}|\vec{v}|^2 \,dx&=&\lim_{n\rightarrow + \infty}\int_{B(0,R(\varepsilon)+\sqrt{N})} |\vec{v}_n|^2\,dx\\
 &=& \lim_{n\rightarrow + \infty} \int_{B(z_n,R(\varepsilon)+\sqrt{N})}|\vec{v}_n|^2\,dx.
 \end{eqnarray*}
 Since $|y_n-z_n| \leq \sqrt{N}$, we have 
 $$\int_{B(z_n,R(\varepsilon)+\sqrt{N})}|\vec{u}_n|^2\,dx \geq
 \int_{B(y_n,R(\varepsilon))}
 |\vec{u}_n|^2\,dx \geq c^2 - \varepsilon.$$
 Hence, for all $\varepsilon >0$ we have  \begin{equation}
 \nr\vec{v}\nr^2_{\vec{L}^2} \geq c^2 -\varepsilon \Rightarrow \nr\vec{v}\nr^2_{\vec{L}^2} \geq c^2. \label{eq3.14}
 \end{equation}
On the other hand $\nr\vec{v}\nr_{\vec{L}^2} \leq \liminf_{n\rightarrow + \infty}\nr\vec{v}_{n}\nr_{\vec{L}^2}$, thus 
 \begin{equation}\nr\vec{v}\nr_{\vec{L}^2} \leq c. \label{eq3.15}\end{equation}
 Thus combining \eqref{eq3.14} and \eqref{eq3.15}, we get $\nr\vec{v}\nr^2_{\vec{L}^2} = c^2$, thus $\nr\vec{v}-\vec{v}_n\nr_{\vec{L}^2}
 \rightarrow 0$ as $n\rightarrow +\infty$. Furthermore, by the periodicity of $F^\infty$, we see that 
 $\mathcal J^\infty(\vec{u}_n) = \mathcal J^\infty(\vec{v}_n) \stcn
 \mathcal I^\infty_{c}$ and $\vec{v}_n \stcn\vec{v}$ in $\vec{L}^p$ for all $p \in
 [2,2^\star)$. It follows that $\vec{v}_n \stcn \vec{v}$ in $\vec{H}^1$ and
 consequently $\displaystyle{\s\; F^\infty(x,\vec{v}_n)} \,dx \stcn
 \displaystyle{\s}F^\infty(x, \vec{v})\,dx$ which implies that $\mathcal J^\infty(\vec{v})
 = \mathcal I^\infty_{c}$.\\
\section{Proof of Theorem \ref{thm1}}
In this section, we prove Theorem \ref{thm1}. Let $(\vec{u}_n)_{n\in\mathbb N}$ denotes a minimizing sequence of $(\mathcal I)$
 and we will again make use of the notation introduced in the appendix. As before, we start by showing that vanishing does not occur by proceeding by contradiction. Indeed, if it occurs, it follows from Lemma I.1 of Ref. \cite{PL2}
  that $\nr|\vec{u}_n|\nr_p \stcn
 0$ for $p \in (2,2^\star)$. Combining $\mathcal{A}_0$ and $\mathcal A_3$, we get that for all  $\delta > 0$ there exists $ R_\delta > 0$ such that for all $|x|\geq R_\delta$ we have 
 $$F(x,\vec{s}) \leq \delta(|\vec{s}|^2 + |\vec{s}|^{\alpha+2})
  + A'(|\vec{s}|^{\beta+2}+|\vec{s}|^{\ell+2}).$$
 Thus, $$\displaystyle{\int_{|x| \geq R_\delta}} F(x,\vec{u}_n)\,dx \leq
 \delta(\nr\vec{u}_n\nr^2_2 + \nr\vec{u}_n\nr^{\alpha+2}_{\alpha+2}) +
 A'(\nr\vec{u}_n\nr^{\beta+2}_{\beta+2}+
 \nr\vec{u}_n\nr^{\ell+2}_{\ell+2}).$$
 Therefore, $$\limsup_{n\rightarrow + \infty}
 \int_{|x| \geq R_\delta} F(x, \vec{u}_n)\,dx\leq \delta c^2.$$
On the other hand
\begin{eqnarray*}
\int_{|x|\leq R_\delta} F(x,\vec{u}_n) dx &\leq& A \int_{|x|\leq
R_\delta} \left(|\vec{u}_n|^2 + |\vec{u}_n|^{\ell+2}\right)\,dx,\\
&\leq& A \left(\nr\vec{u}_n\nr^{\ell+2}_{\ell+2}
|R_\delta|^{\frac{\ell}{\ell+2}} +
\nr\vec{u}_n\nr^{\ell+2}_{\ell+2}\right)\;\stcn0.
\end{eqnarray*}
Hence, for any $\delta > 0$ we see that
$$\limsup_{n\rightarrow +\infty}\s\;F(x, \vec{u}_n) \,dx< \delta c^2,$$
and so
$$\lim_{n\rightarrow +\infty} \s\,F(x,\vec{u}_n)\,dx = 0.$$
 The contradiction follows since we know that  $ \mathcal J(\vec{u}_n)\stcn \mathcal I_{c} < 0$. Now, we show that dichotomy does not occur. We argue again by contradiction and suppose first that the sequence $\{y_n\}_{n\in\mathbb N}$ is bounded. We write
\begin{eqnarray*}
J(\vec{u}_n) -J(\vec{v}_n) - J^\infty(\vec{w}_n) &=&
\frac{1}{2} \s \left( |\nabla \vec{w}_n|^2 - |\nabla \vec{v}_n|^2 -\nabla
\vec{w}_n|^2\right)\,dx \\
&-&\s\left( F(x,\vec{u}_n)-F(x,\vec{v}_n) -F(x,\vec{w}_n) \right)\,dx\\
 &+& \s\left( F^\infty(x,\vec{w}_n)-F(x,\vec{w}_n)\right)\,dx,\\
& \geq& - \varepsilon - \s\left( F (x,\vec{u}_n) - F(x,\vec{v}_n+\vec{w}_n)\right)\,dx\\
&+&  \s\left( F^\infty (x,\vec{w}_n)-F(x,\vec{w}_n)\right)\,dx,\\
&\geq& - \varepsilon - \s\left( F(x,\vec{u}_1)-F(x,\vec{v}_n + \vec{w}_n)\right)\,dx \\
&+& \int_{|x-y_n|\geq R_n} \left(F^\infty(x,\vec{w}_n)-F(x,\vec{w}_n)\right)\,dx.
\end{eqnarray*}
We used the fact that $\textrm{Supp}\:\vec{v}_n \cap \textrm{Supp} \:\vec{w}_n =
\emptyset$. Now using the same argument as before, it follows that given $\delta >
0$, we can choose $\varepsilon = \varepsilon_\delta \in (0,\delta)$
such that
$$- \varepsilon - \s\left( F(x,\vec{u}_n) - F(x, \vec{v}_n + \vec{w}_n)\right) \geq - \delta.$$
Therefore, we get 
$$\mathcal J(\vec{u}_n) -\mathcal J(\vec{v}_n) - \mathcal J^\infty(\vec{w}_n) \geq
- \delta + \int_{|x-y_n|\geq R_n} \left(F^\infty(x,\vec{w}_n)
-F(x,\vec{w}_n)\right)\,dx.$$
Given any $\eta > 0$, we can find $R > 0$ such
that for all $\vec{s}$ and $|x| \geq R$
$$|F^\infty(x,\vec{s})-F(x,\vec{s})| \leq \eta (|\vec{s}|^2 +|\vec{s}|^{\alpha+2}).$$
Now, since $R_n\stcn +\infty$ and we are supposing that
$\{y_n\}_{n\in\mathbb N}$ is bounded, we have for $n$ large enough
$$\{x : |x-y_n|\geq R_n\} \subset \{x : |x|\geq R\}.$$
From this and the boundedness of $\{\vec{w}_n\}_{n\in\mathbb N}$ in $\vec{H}^1$, it
follows that
$$\lim_{n\rightarrow + \infty}\int_{|x-y_n|\geq R_n}
 \left(F^\infty(x,\vec{w}_n)-F(x,\vec{w}_n) \right)\,dx= 0.$$
Now, let $a^2_{n} = \sum^m_{i=1} \int_{{\mathbb R}^N} v^2_{n,i}\,dx$ and $b^2_{n}  = \sum^m_{i=1} \int_{{\mathbb R}^N} w^2_{n,i}\,dx$. Passing to a subsequences if necessary, we may suppose that 
$a^2_{n}(\delta) \stcn a^2(\delta)$ and $b^2_{n}(\delta) \stcn b^2(\delta)$
where $|a^2(\delta)-a^2| \leq \varepsilon_\delta < \delta$ and
$|b^2(\delta)-(c^2-a^2)| \leq \varepsilon_\delta < \delta$.
Recalling that the mappings $c\mapsto\mathcal I_{c}$ and $c\mapsto\mathcal I^\infty_{c}$ are
continuous we find that
\begin{eqnarray*}
\mathcal I_{c} &=& \lim_{n\rightarrow + \infty} \mathcal J(\vec{u}_n) \geq
\liminf_{n\rightarrow + \infty}\{\mathcal J(\vec{v}_n)+\mathcal J^\infty(\vec{w}_n)\}
-\delta,\\
&\geq& \liminf_{n\rightarrow + \infty}
\{\mathcal I_{a_1(\delta),...,a_{n}(\delta)} +
\mathcal I_{b_1(\delta),...,b_{n}(\delta)}\}-\delta.
\end{eqnarray*}
Thus, $\mathcal I_{c} \geq \mathcal I_{a(\delta)}
 + \mathcal I_{b(\delta)} -\delta$. Letting $\delta \rightarrow 0$ we get 
 $$\mathcal I_{c} \geq \mathcal I_{a} + \mathcal I_{\sqrt{c^2-a^2}}.$$
 Therfore, the sequence $\{y_n\}_{n\in\mathbb{N}}$ cannot be bounded and passing to a
 subsequence if necessary, we may suppose that $|y_n| \rightarrow +\infty$.
 Now we obtain a contradiction with Lemma \ref{Lemma 3.3} by using similar
 arguments applied to $\mathcal J(\vec{u}_n)-\mathcal J^\infty(\vec{v}_n) -
 \mathcal J(\vec{w}_n)$ to show that $\mathcal I_{c} \geq \mathcal I_{a} +
 \mathcal I_{\sqrt{c^2-a^2}}$ and therefore prove that dichotomy cannot occur. Eventually, the compactness occurs. According to the appendix, there exists $\{y_n\}_{n\in\mathbb N} \subset \mathbb{R}^N$ such
 that for all $\varepsilon>0$
 $$\int_{B(y_n,R(\varepsilon))} \left(u^2_{n,1} +...+u^2_{n,m}\right)\,dx \geq c^2 - \varepsilon.$$
 Let us first prove that the sequence $\{y_n\}_{n\in\mathbb N}$ is bounded . By contradiction, if it is not the case, we may assume that $|y_n|\stcn +\infty$ by
 passing to a subsquence. Now we can choose $z_n \in \mathbb{Z}^N$ such that $y_n -z_n \in
 [0,1]^N$. Setting $\vec{v}_n(x) = \vec{u}_n(x+z_n)$, we can
 suppose that $\vec{v}_n \rightharpoonup \vec{v}$ weakly in $\vec{H}^1$
 and
 $$\nr\vec{v}_n-\vec{v}\nr_{\vec{L}^2} \stcn 0\quad \textrm{ for } \quad2 \leq p \leq 2^\star,$$
 $$\mathcal J^\infty(\vec{v}_n) = \mathcal J^\infty(\vec{u}_n).$$
 On the other hand, we have 
 \begin{eqnarray*}
\mathcal J(\vec{u}_n) - \mathcal J^\infty(\vec{u}_n) &=& \s \left(
  F^\infty(x,\vec{u}_n)-F(x,\vec{u}_n)\right)\,dx,\\
  &=& \s \left(F^\infty (x,\vec{v}_n)-F(x-z_n,\vec{v}_n)\right)\,dx.
 \end{eqnarray*}
 Now, given $\varepsilon > 0$ it follows from $\mathcal A_3$ that there
 exists $R > 0$ such that 
 \begin{eqnarray*}
 |\int_{|x-z_n|\geq R} \left(F^\infty(x,\vec{v}_n)\right. &-&\left.\left. F(x-z_n ,\vec{v}_n)\right)\,dx\right| \\ &=& \Big|\int_{|x-z_n|\geq R}
 \left(F^\infty(x-z_n,\vec{v}_n)-F(x-z_n,\vec{v}_n)\right)\,dx\Big|,\\
 &\leq&\varepsilon \int_{|x-z_n|\geq R} \left(|\vec{v}_n|^2 +
 |\vec{v}_n|^{\alpha+2} \right)\,dx, \\&\leq& \varepsilon
 C\left(\nr\vec{v}_n\nr^2_{\vec{H}^1} +
 \nr\vec{v}_n\nr^{\alpha+2}_{\vec{H}^1}\right) \leq  \varepsilon D,
  \end{eqnarray*}
  since $ \vec{v}_n$ is bounded in $\vec{H}^1$. Next, since $|z_n|\stcn +\infty$, there exists
  $n_R > 0$ such that for all $n \geq n_R$ we have 
  \begin{eqnarray*}
  \Big|\int_{|x-z_n|\leq R} \left(F^\infty(x,\vec{v}_n)\right.&-& \left.F(x-z_n,\vec{v}_n)\right)\,dx\Big|\\ &\leq& |\int_{|x| \geq \frac{1}{2} |z_n|}
  \left(F^\infty(x,\vec{v}_n)-F(x-z_n, \vec{v}_n)\right)\,dx,\\
  &\leq& K\int_{|x| \geq \frac{1}{2} |z_n|}\left(|\vec{v}_n|^2 +
  |\vec{v}_n|^{\ell+2}\right)\,dx,\\
  &\leq& K\int_{|x| \geq \frac{1}{2}|z_n|}\left(|\vec{v}|^2 +
  |\vec{v}|^{\ell+2}\right)\,dx \\&+& K\int_{|x|\geq \frac{1}{2} |z_n|}\left(|\vec{v}-
  \vec{v}_n|^2 + |\vec{v}-\vec{v}_n|^{\ell+2}\right)\,dx,\\
  &\leq& K\int_{|x|\geq \frac{1}{2} |z_n|} \left(|\vec{v}|^2 +
  |\vec{v}|^{\ell+2}\right)\,dx \\ &+& K\int_{\mathbb{R}^N}\left( |\vec{v}-\vec{v}_n|^2 +
  |\vec{v}-\vec{v}_n|^{\ell+2}\right)\,dx.
  \end{eqnarray*}
  Therefore, $$\lim_{n\rightarrow +\infty} |\int_{|x-z_n|\geq R_n} \left(F^\infty(x,\vec{v}_n)
  -F(x-z_n,\vec{v}_n) \right)\,dx|= 0.$$
  Thus, for all $\varepsilon >0$, we get $\liminf_{n\rightarrow +\infty}\{ \mathcal J(\vec{u}_n)-\mathcal J^\infty(\vec{u}_n)\} \geq -\varepsilon D$ and so $$\mathcal I_{c} = \lim_{n\rightarrow +\infty} \mathcal J(\vec{u}_n) \geq \liminf_{n\rightarrow +\infty} \mathcal J^\infty(\vec{u}_n)
  \geq \mathcal I^\infty_{c}.$$
  We reach a contradiction with the fact that $\mathcal I_{c} < \mathcal I_{c}^\infty$. Hence $\{y_n\}_{n\in\mathbb N}$ is bounded. Setting $\rho = \sup_{n \in \mathbb{N}}|y_n|$, it follows that for all $\varepsilon >0$
  \begin{eqnarray*}
  \int_{B(0,R(\varepsilon)+\rho)} \left(u^2_{n,1} +...+ u^2_{n,m}\right)\,dx &\geq&\int_{B(y_n,R(\varepsilon))} \left(u^2_{n,1}+...+u^2_{n,m}\right)\,dx,\\
  &\geq& c^2-\varepsilon.
  \end{eqnarray*}
Thus, for all $\varepsilon > 0$
\begin{eqnarray*}
\s\;|\vec{u}|^2\,dx &\geq& \int_{B(0,R(\varepsilon)+\rho)}
|\vec{u}|^2 \,dx, \\&=& \lim_{n\rightarrow +
\infty}\int_{B(0,R(\varepsilon)+\rho)}|\vec{u}_n|^2\,dx\geq c^2-\varepsilon.
\end{eqnarray*}
Hence $\displaystyle{\int}(u^2_1+..+u^2_m)\,dx \geq c^2$. On the other hand $\displaystyle{\int} (u^2_1 + u^2_2 +...+ u^2_m)\,dx \leq c^2$. Thus $\vec{u} \in \mathcal S_c$ and $\nr\vec{u}_n-\vec{u}\nr_{\vec{L}^2}
\stcn 0$. By the boundedness of $\{\vec{u}_n\}_{n\in\mathbb N}$ in $\vec{H}^1$, it follows that
$\vec{u}_n \stcn\vec{u}$ in $\vec{L}^p$ for all $p \in
[2,2^\star]$. Therefore $\lim_{n\rightarrow +\infty} \int
F(x,\vec{u}_n) \,dx= \int F(x,\vec{u})\,dx$ implying that $ \mathcal J(\vec{u})
= \mathcal I_{c}$. 
\section*{Appendix}
In this appendix, we present the concentration-compactness
Lemma in the multi-constrained setting for the reader convenience. Let $\{\vec{u}_n\}_{n\in\mathbb N}$ be a minimizing sequence of the problem $(\mathcal I)$, we
introduce its associate concentration function \[Q_n(R) =
\displaystyle{\sup_{y \in
\mathbb{R}^N}\int_{B_R+y}}\rho_n^2(\xi)d\xi,\] where $\rho^2_n(\xi) =
|\vec{u}_n|^2 = \displaystyle{\sum^m_{i=1}}u^2_{n,i}(\xi)$. Applying the concentration compactness method (see page 136-137 of Ref. \cite{PL1} and page 272-273 of Ref. \cite{PL2}), one of the following alternatives
occur :
\subsection*{Vanishing} That is, $\limsup_{ y \in \mathbb{R}^N}\displaystyle{\int_{y+B_R}}
|\vec{u}_n|^2 = 0$.
\vskip6pt
\subsection*{Dichotomy} That is, for all $1 \leq i \leq m$, there exists $a_i \in
(0,c_i)$ such that for all $\varepsilon > 0$, there exists  $n_0 \in
\mathbb{N}$ and two bounded sequences in $\vec{H}^1$ denoted by
$\{\vec{v}_n\}_{n\in\mathbb N}$ and $\{\vec{w}_n\}_{n\in\mathbb N}$ (all depending on $\varepsilon)$ such
that for all $n \geq n_0$, we have for all $1 \leq i \leq m$ and $p\in(2,2^\star]$
\begin{align*}
&\left|\s\; v^2_{n,i}\,dx - a^2_i\right| < \varepsilon,\;\; \left|\s w^2_{n,i}\,dx - (c^2_i - a_i^2)\right| <
\varepsilon, \\&\\
&\s\left(|\nabla \vec{u}_n|^2 - |\nabla \vec{v}_n|^2 -
|\nabla \vec{w}_n|^2\right)\,dx \geq -2\varepsilon, \\&\\
& \nr u_{n,i} - (v_{n,i}
+ w_{n,i})\nr_p \leq 4 \varepsilon.
\end{align*}
Furthermore, there exists a sequence $ \{y_n\}_{n\in\mathbb N} \subset \mathbb{R}^N$
and $\{R_n\}_{n\in\mathbb N} \subset (0,+\infty)$ such that $\displaystyle{\lim_{n\rightarrow + \infty}}R_n = + \infty$, $\textrm{dist}(\textrm{Supp}|v_{n,i}|, \textrm{Supp}|w_{n,i}|) \stcn +\infty$ and
\[\left\lbrace
\begin{array}{lcl}
v_{n,i} = u_{n,i} &\textrm{ if } &|x-y_n| \leq R_0,\\
|v_{n,i}| \leq |u_{n,i}| &\textrm{ if } &R_0 \leq |x-y_n| \leq 2R_0,\\
v_{n,i} = 0 &\textrm{ if } &|x-y_n|\geq 2R_0, \\
w_{n,i} = 0 &\textrm{ if } &|x-y_n| \leq R_n,\\
|w_{n,i}| \leq |u_{n,i}| &\textrm{ if } &R_n \leq |x-y_n| \leq 2R_n,\\
w_{n,i} = u_{n,i} &\textrm{ if } &|x-y_n|\geq 2R_n.
\end{array}
\right.\]
\subsection*{Compactness} That is, there exists a sequence $\{y_n\}_{n\in\mathbb N} \subset \mathbb{R}^N$ such that for
all $\varepsilon > 0$, there exists $R(\varepsilon) > 0$ such that
$$\int_{B(y_n,R(\varepsilon))} |\vec{u}_n|^2 \,dx\geq \sum^m_{i=1} c^2_i
- \varepsilon.$$ 
\vskip6pt
As suggested and stated by Lions in Ref. \cite{PL2}, page
137-138, to get the above properties, it suffices to apply his
method to $\rho_n$. Decomposing $\rho_n$ in the classical setting
and thus simultaneously $u_{n,i}$, leads to the properties of the
splitting sequences $\vec{v}_n$ and $\vec{w}_n$, mentioned above.
\subsection*{Acknowledgement} The first author thanks the Deanship of Scientific Research at
King Saud University for funding the work through the research group project No. RGP-VPP-124.

\end{document}